\documentclass[12pt,a4paper,leqno]{article}
\usepackage[margin=2cm]{geometry}

\usepackage[normalem]{ulem}

\usepackage{amsmath,amsfonts,amssymb,amscd,amsthm}
\usepackage{hyperref}
\usepackage{cleveref}
\usepackage[shortlabels]{enumitem}
\usepackage{url}
\usepackage{tikz-cd}
\usepackage{graphicx}
\usepackage{comment}
\usepackage{mathtools} 
\usepackage{listings} 
\usepackage[ruled,vlined]{algorithm2e}

\usepackage[style=alphabetic,bibstyle=alphabetic,citestyle=alphabetic,backend=biber,doi=false, isbn=false, maxnames=99]{biblatex} 

\DeclareLanguageMapping{british}{british-apa} 

\numberwithin{equation}{section}
\newtheorem{Theorem}[equation]{Theorem} 
\newtheorem{Proposition}[equation]{Proposition} 
\newtheorem{Question}[equation]{Question} 
\newtheorem{Lemma}[equation]{Lemma} 
\newtheorem{Corollary}[equation]{Corollary}

\newtheorem{Prob}[equation]{Problem} 
\newtheorem{Example}[equation]{Example}

\newtheorem{Remark}[equation]{Remark} 
\theoremstyle{definition}
\newtheorem{Definition}[equation]{Definition}

\newtheoremstyle{named}{}{}{\itshape}{}{\bfseries}{.}{.5em}{#3}
\theoremstyle{named} 

\numberwithin{table}{section}

\newcommand{\ol}{\overline}

\newcommand{\R}{\mathbb R}
\newcommand{\Q}{\mathbb Q}
\newcommand{\Z}{\mathbb Z}
\newcommand{\F}{\mathbb F}

\newcommand{\N}{\mathbb N}

\newcommand{\A}{\mathbb A}

\newcommand{\calO}{\mathcal{O}}

\newcommand{\calR}{\mathcal{R}}

\newcommand{\Rcal}{\mathcal{R}}
\newcommand{\calT}{\mathcal{T}}
\newcommand{\calG}{\mathcal{G}}

\newcommand{\Syg}{\mathcal{S}}

\newcommand{\lto}{\longrightarrow}
\newcommand{\into}{\hookrightarrow}

\DeclareMathOperator{\Spec}{Spec}

\newcommand{\Hom}{\mathrm{Hom}}

\newcommand{\rank}{\mathrm{rank}}
\renewcommand{\phi}{\varphi}

\newcommand{\LT}{\mathrm{LT}}
\newcommand{\ec}{\mathrm{ec}}

\DeclareMathOperator{\pfrak}{\mathfrak{p}}
\DeclareMathOperator{\mfrak}{\mathfrak{m}}

\DeclareMathOperator{\Tcal}{\mathcal{T}}

\DeclareMathOperator{\Fbb}{\mathbb{F}}

\title{A multivariate  Strassmann theorem }

\author{Guido Maria Lido \& Luca Mauri \\ 
	\small{
		\qquad  \href{mailto:guidomaria.lido@gmail.com}{guidomaria.lido@gmail.com}} 	\quad \href{mailto:luca.mauri@phd.unipi.it}{luca.mauri@phd.unipi.it} \\
		\small{Universit\`{a} di Roma Tor Vergata} \quad \small{Universit\`{a} di Pisa} \qquad\qquad  
        }
        
\date{}

\begin{document}

\maketitle

\begin{abstract}
By a theorem of Strassmann, a non-zero convergent power series in one variable over a complete non-Archimedean field has finitely many zeros, with an explicit bound on their number. 
We generalize this result to convergent power series in several variables, characterizing finiteness of the zero set and bounding its cardinality in terms of the reduction of the saturated ideal defined by the power series.

We discuss how to make our result effective, under suitable assumptions, when working with approximate power series.
\end{abstract}


\section{Introduction}
\label{section introduction}

Let $K$ be a field complete with respect to a non-trivial, non-Archimedean (possibly not discrete) valuation $v \colon K \to \R \cup \{+\infty\}$. Its ring of integers $R\coloneqq\{a\in K \,\colon\, v(a)\ge0\}$ has Krull dimension $1$, since $v$ is non-trivial and takes values in $\R \cup \{+\infty\}$.  We denote  by $\mfrak\coloneqq\{a\in K \,\colon\, v(a)>0\}$ the unique maximal ideal of $R$ and by $k\coloneqq R/\mfrak$  the residue field of $\mfrak$. 

A power series in $K[[t]]$ converges on $R$ if and only if the valuation of its coefficients tends to $+\infty$. In \cite{MR1581148}, Strassmann proved that a non-zero convergent power series has only finitely many zeros in $R$, and gave an explicit upper bound.\footnote{Strassmann focused on the case where $K$ is a $p$-adic field, but the result holds in greater generality}

\begin{Theorem}[Strassmann]
\label{Strassmann's Theorem section introduction}
    Let 
    \begin{equation*}
        f(x)=\sum_{n=0}^{\infty}a_{n}x^{n} 
    \end{equation*}
    be a non-zero power series with coefficients in $K$ that converges on $R$.
    Then $f$ has only finitely many zeros in $R$ and, defining $M=\min_{n}v(a_n)$, the number of zeros of $f$ in $R$ is at most the natural number
    \begin{equation} \label{eq:strassman bound}
        \max\{n \,\colon\, v(a_{n})=M\} .
    \end{equation}
\end{Theorem}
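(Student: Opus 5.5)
Write $N\coloneqq\max\{n \,\colon\, v(a_n)=M\}$; this exists because $v(a_n)\to+\infty$ forces only finitely many indices to attain the minimum $M$. The plan is induction on $N$, factoring out one root at a time and tracking how the normalized coefficients change. First rescale: choose $c\in K$ with $v(c)=M$ and replace $f$ by $c^{-1}f$. This changes neither the zero set nor $N$, and now $f\in R[[x]]$ with $\min_n v(a_n)=0$. Its reduction $\bar f\in k[x]$ is a nonzero polynomial (convergence gives $v(a_n)>0$ for $n\gg0$), of degree exactly $N$.

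\emph{Base case $N=0$.} Here $v(a_0)=0$ and $v(a_n)>0$ for all $n\ge1$. For $x\in R$ the ultrametric inequality gives $v\bigl(\sum_{n\ge1}a_nx^n\bigr)>0$ (the terms have valuations tending to infinity, so the infimum is attained and positive). Hence $v(f(x))=0$, so $f$ has no zeros in $R$.

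\emph{Inductive step.} Suppose $N\ge1$ and $f(\alpha)=0$ for some $\alpha\in R$ (otherwise there is nothing to prove). Write
\begin{equation*}
f(x)=f(x)-f(\alpha)=\sum_{n\ge1}a_n(x^n-\alpha^n)=(x-\alpha)g(x),\qquad g(x)=\sum_{j\ge0}b_jx^j,\quad b_j=\sum_{n>j}a_n\alpha^{n-1-j}.
\end{equation*}
The series defining $b_j$ converges because $v(\alpha)\ge0$ and $a_n\to0$. Moreover $b_j\in R$ and $b_j\to0$, since $v(b_j)\ge\min_{n>j}v(a_n)$. So $g$ converges on $R$. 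From $a_j=b_{j-1}-\alpha b_j$ (with $b_{-1}=0$) the coefficients of $f$ and $g$ determine each other, and reducing mod $\mfrak$ gives $\bar f=(x-\bar\alpha)\bar g$ in $k[[x]]$. Since $\bar f$ is a polynomial of degree $N$ and $k[[x]]$ is a domain, $\bar g$ is a nonzero polynomial of degree $N-1$. The main point to check is that the minimum valuation of the $b_j$ is exactly $0$, so that the invariant of $g$ is read off from $\bar g$. This follows directly: $\bar g\ne0$ means some $b_j$ is a unit, and all $b_j\in R$. Hence $g$ has invariant $N-1$, and by induction it has at most $N-1$ zeros in $R$.

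Every zero $\beta\ne\alpha$ of $f$ satisfies $(\beta-\alpha)g(\beta)=0$, so $g(\beta)=0$ because $K$ is a field. Therefore $f$ has at most $1+(N-1)=N$ zeros.

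The only delicate points are the convergence and rearrangement justifications in defining $g$: interchanging sums in $\sum_n a_n(x^n-\alpha^n)$ is legitimate for absolutely (non-Archimedean) convergent double series whose terms tend to $0$. The degree count for $\bar g$ is what makes the induction go through, and the argument is conceptually the statement that $\deg\bar f$ bounds the number of roots, in the spirit of Weierstrass preparation.
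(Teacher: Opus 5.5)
Your proof is correct. It is the standard textbook induction: you divide out one root at a time via $f(x)-f(\alpha)=(x-\alpha)g(x)$. Your way of reading the invariant of $g$ off the factorization $\bar f=(x-\bar\alpha)\bar g$ in $k[x]$ is clean; it avoids estimating the valuations of the $b_j$ by hand. The coefficient identity $a_j=b_{j-1}-\alpha b_j$ at $j=0$ is exactly where $f(\alpha)=0$ enters, and it makes $f=(x-\alpha)g$ hold in $R[[x]]$, so the reduction step is justified.

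The paper does not prove this statement itself; it cites Strassmann. It presents its main result as a generalization: after rescaling, Strassmann's bound is $\deg\bar f=\dim_k k[x]/(\bar f)$. So the paper's implicit proof is the case $n=r=1$ of Theorem~\ref{main theorem section introduction}, and that route is quite different from yours:
\begin{enumerate}
\item Weierstrass preparation and division show that the saturated quotient $A=\calR_1/I^{\ec}$ is a finitely generated, torsion-free, hence free, $R$-module.
\item The Artinian decomposition of $\ol A$ lifts to $A=\prod_m A_m$.
\item Each zero gives an $R$-algebra map $A_m\to R$, hence a $K$-algebra map $A_m\otimes_R K\to K$. So the number of zeros is at most $\sum_m\rank_R A_m=\sum_m\dim_k\ol A_m$, summed over the $k$-rational maximal ideals $m$.
\end{enumerate}

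Your argument is more elementary and self-contained. It uses only the ultrametric inequality, completeness, and reduction modulo $\mfrak$, and like the paper's it needs no discreteness of the valuation. But it is intrinsically one-variable: it relies on a zero $\alpha$ splitting off a linear factor $x-\alpha$. Common zeros of several series in several variables have no such factor, which is why the paper counts $R$-algebra maps out of a finite free $R$-algebra instead.

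The paper's method also gives the sharper bound "number of roots of $\bar f$ in $k$, counted with multiplicity." Your induction yields this too with a one-line change: each zero $\alpha$ splits off $x-\bar\alpha$ with $\bar\alpha\in k$, so you can induct on that count instead of on $\deg\bar f$.
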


The above finiteness result can be applied to prove basic properties of $p$-adic analytic functions, e.g., that a convergent power series $f$ on $R$ is determined by the values $f(a_i)$ at infinitely many points $a_i \in R$, and that it is not periodic unless it is constant.
Moreover, Strassmann's bound can sometimes be used to bound the number of solutions of Diophantine equations. In particular, it comes in handy when $p$-adic methods can be applied, such as Skolem's method for Thue equations (\cite{skolem1934verfahren}) or Chabauty-Coleman's method for rational points on curves of genus at least~$2$ (\cite{MR4484,Coleman}). Another noteworthy consequence is the Skolem--Mahler--Lech Theorem, which concerns the zero set of a linear recurrence with constant coefficients in a field of characteristic zero (\cite{skolem1934verfahren, zbMATH03015968, lech1953note}).

We present a generalization of Strassmann's theorem 
to the setting of convergent power series in several variables. The set of formal power series in $n$ variables that converge in the unit ball $R^{n}$ is the Tate algebra
\begin{equation}\label{eq:T_n}
    \mathcal{T}_{n}=K\langle x_{1},\,\dots,\,x_{n}\rangle\coloneqq\biggl\{\sum_{I\in\N^{n}} a_{I}x^{I}\,\colon\,a_{I}\in K \text{ and } v(a_{I})\to +\infty \text{ as } |I|\to +\infty\biggr\} .
\end{equation}
We also consider the algebra of its ``integral" elements
\begin{equation}\label{eq:R_n}
    \mathcal{R}_{n}=R\langle x_{1},\,\dots,\,x_{n}\rangle\coloneqq\biggl\{\sum_{I\in\N^{n}} a_{I}x^{I}\,\colon\,a_{I}\in R \text{ and } v(a_{I})\to +\infty \text{ as } |I|\to +\infty\biggr\} .
\end{equation}
A power series  $f$ in $\calR_n$ can be reduced modulo $\mfrak$ by reducing its coefficients, and the convergence condition ``$v(a_{I})\to +\infty$'' implies that the reduction $\ol f$ is in fact a polynomial in $k[x_1,\ldots, x_n]$. 
The same convergence condition also implies that for any non-zero power series $f \in \calT_n$ there exists a constant $c \in 
K^\times$ such that $c\cdot f$ lies in $\calR_n$. Moreover, choosing carefully the valuation of $c$, we can suppose that the reduction $\ol{c\cdot f}$ is non-zero. For such a constant $c$, Strassmann's bound in Equation \eqref{eq:strassman bound} equals the degree of $\ol{c f}$ or, equivalently, the dimension of the $k$-vector space $k[x]\bigl/ (\ol{cf})$. Our main result is a direct generalization of this observation. 
Before stating the theorem, for any subset $I \subset \calT_n$, let us introduce the notation 
\begin{equation}
    V_R(I) :=  \big\{ (a_1, \ldots, a_n) \in R^n : f(a_1, \ldots, a_n) = 0 \quad \forall f\in I \big\},
\end{equation}
to denote the set of zeros of $I$ in $R^n$.

\begin{Theorem}
\label{main theorem section introduction}   
    Let $f_{1}, \dots, f_{r}$ be power series in $\Rcal_{n}$ and let $\ol{f_{1}}, \dots,\ol{f_{r}}$ be their reductions modulo~$\mfrak$. If
    \begin{equation*} \label{eq:defAbar}
       \ol{A}\coloneqq k [ x_1,\ldots, x_n ]/ (\ol{f_1}, \ldots, \ol{f_r})
    \end{equation*}
    is a finite-dimensional $k$-vector space, then the set 
    of common zeros of $f_{1},\dots,f_{r}$
    in $R^{n}$ is finite and satisfies the inequalities
    \begin{equation}\label{eq:bound_zeroes}
       \# V_{R}\bigl(\{ f_{1}, \dots, f_{r}\}\bigr) \le \sum_m \dim_k(\ol{A}_m) \le \dim_k(\ol{A}) ,
    \end{equation}
    where the sum ranges over the finitely many maximal ideals~$m \subset \ol A$ such that $\ol{A}/m=k$, and $\ol{A}_{m}$ denotes the localization of $\ol{A}$ at $m$.
\end{Theorem}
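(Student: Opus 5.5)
We argue through the reduction map $V_R(\{f_1,\dots,f_r\}) \to k^n$, $a \mapsto \ol a$, and bound separately the number of residue classes that can occur and the number of zeros lying over each class.

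\textbf{Step 1: which residue classes occur.} If $a\in R^n$ is a common zero, then $\ol{f_i}(\ol a)=\ol{f_i(a)}=0$ for every $i$. So $\ol a$ is a $k$-rational point of $\Spec \ol A$, that is, it gives a maximal ideal $m\subset\ol A$ with $\ol A/m=k$. Since $\ol A$ is finite-dimensional, there are finitely many such $m$, and $\ol A\cong\prod_{m'}\ol A_{m'}$ over all maximal ideals $m'$. Hence $\sum_m \dim_k \ol A_m\le\dim_k\ol A$, which is the second inequality.

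\textbf{Step 2: zeros above a fixed residue class.} Fix $b\in k^n$ with maximal ideal $m$, and lift it to $\beta\in R^n$. Every zero reducing to $b$ has the form $\beta+y$ with $y\in\mfrak^n$. Set $g_i(y)=f_i(\beta+y)\in\Rcal_n$. Then $\ol{g_i}$ is $\ol{f_i}$ translated by $b$, so $\ol A_m\cong (k[y]/(\ol{g}))_{(y)}$. Call its dimension $d$. It suffices to show that $g_1,\dots,g_r$ have at most $d$ common zeros in $\mfrak^n$.

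\textbf{Step 3: the local count, which is the main obstacle.} Because $\dim_k \ol A_m=d$, some power of $(y)$ vanishes in $\ol A_m$, so $y^I\in(\ol g)+(y)^N$ for $|I|\ge N$. Working in $k[[y]]$, the quotient $k[[y]]/(\ol g)$ also has dimension $d$. The plan is to lift this to the ring $\mathcal{B}=R[[y]]$ with the $(\mfrak, y)$-adic topology, or better to the ring of series convergent on the open polydisc $\mfrak^n$. There I would show that $\mathcal{B}/(g)$ is a finitely generated $R$-module generated by $d$ monomials: one lifts a $k$-basis of $k[[y]]/(\ol g)$ and applies a Nakayama-type argument, which needs completeness and care since $\mfrak$ may be non-noetherian. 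To handle that, I would first reduce to the case where the coefficients of the $g_i$ lie in a complete discretely valued or noetherian subring. Alternatively, I would replace $\mfrak^n$ by closed polydiscs of radius $|\pi|<1$ and rescale $y=\pi z$. This gives Tate algebras $K\langle z\rangle$, where Weierstrass preparation and Noether normalization are available, and one lets $\pi\to1$, noting that only finitely many zeros can appear.

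Once $M=\mathcal{B}/(g)\otimes K$ is a $K$-vector space of dimension at most $d$, evaluation at distinct zeros $a^{(1)},\dots,a^{(s)}$ gives a $K$-algebra map $M\to K^s$. This map is surjective by the Chinese remainder theorem, since the points give distinct maximal ideals, each with residue field $K$. Therefore $s\le d$.

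Summing the local bounds over the finitely many $m$ from Step 1 gives $\#V_R\le\sum_m\dim_k\ol A_m$, which is the first inequality.
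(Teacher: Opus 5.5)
\textbf{The proposal has a genuine gap in Step 3 when the valuation is non-discrete, a case the theorem covers.}

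Steps 1 and 2 and your closing Chinese-remainder count are correct. When the valuation is discrete, Step 3 can be completed as you suggest. In that case $R[[y]]$ is a complete Noetherian local ring, and $R[[y]]/(g)$ is $\pi$-adically separated by Krull's intersection theorem. The complete Nakayama lemma then shows that lifts of a $k$-basis of $k[[y]]/(\ol g)$ generate $R[[y]]/(g)$ over $R$. This local route even bypasses saturation.

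The theorem, however, allows a non-discrete valuation. There Step 3, which you flag as the main obstacle without resolving it, breaks down for three reasons:
\begin{itemize}
\item Since $\mfrak=\mfrak^2$, the $(\mfrak,y)$-adic topology on $R[[y]]$ is not separated, so there is no complete Nakayama lemma for it.
\item The kernel of $R[[y]]\to k[[y]]$ is $\mfrak[[y]]$, which is strictly larger than $\mfrak R[[y]]$ (take coefficients in $\mfrak$ whose valuations tend to $0$). So after matching a lift of the $k$-basis modulo $\mfrak$, the error need not be divisible by any single $\pi\in\mfrak$, and successive approximation need not converge.
\item Nakayama fails for modules not already known to be finitely generated when the ideal is only locally nilpotent, e.g.\ $\mfrak\cdot(\mfrak/\pi R)=\mfrak/\pi R\neq 0$.
\end{itemize}
Your workarounds do not repair this. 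Coefficients whose valuations tend to $+\infty$ can still generate a dense subgroup of $\R$, so in general no complete discretely valued subring contains them. Rescaling $y=\pi z$ and letting $|\pi|\to 1$ presupposes the very zero bound you are proving on each closed polydisc. It also leaves unstated how the reductions of the normalized $g_i(\pi z)$ relate to $d$. The claim that ``only finitely many zeros can appear'' is exactly the point at issue.

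The missing ingredient is a finiteness statement over $R$ that does not rely on Noetherianity. The paper obtains it globally, as follows.
\begin{itemize}
\item Replacing $I=(f_1,\dots,f_r)$ by its saturation $I^{\ec}$ does not change the zero set and can only decrease $\sum_m\dim_k\ol A_m$.
\item Theorem~\ref{Theorem main}, implication 3)$\Rightarrow$4), shows that $\calR_n/I^{\ec}$ is finite over $R$. The proof is by induction on $n$, using Weierstrass preparation and division in $\calR_n$, which are available over any complete rank-one valuation ring. A resultant argument shows that $\ol{I^{\ec}\cap\calR_{n-1}}$ and $\ol{I^{\ec}}\cap k[x_1,\dots,x_{n-1}]$ have the same radical.
\item Being torsion-free, $\calR_n/I^{\ec}$ is then free over $R$.
\item $\pi$-adic completeness and lifting of idempotents split it as $\prod_m A_m$.
\item Finally $\#\Hom(A_m,R)\le\rank_R A_m=\dim_k\ol A_m$ gives the bound.
\end{itemize}
Without such an argument, your proof covers only the discretely valued case.
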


The above theorem is a more general version of \cite[Theorem~4.12]{MR4556934}. 
We provide a different proof, which also applies when the valuation is not discrete. 

In the context of Theorem \ref{main theorem section introduction}, the set of zeros $V_R(\{f_1,\ldots, f_r\})$ is equal to $V_R(I)$, where $I \subset \calR_n$ is the ideal generated by the $f_i$. Then $\ol A$ is the reduction of $\calR_n/ I$.   
In Section~\ref{section main results}, we show that the finiteness of $\ol{A}$ is  necessary when $K$ is algebraically closed and $I$ is replaced by its saturation
\begin{equation}
\label{equation saturation of an ideal section introduction}
    I^{\ec} := \bigl\{ f\in \calR_n : \exists r\in R \setminus \{0\} \, \text{ such that }\, rf \in I \bigr\} .
\end{equation}

We then study the computation of such a saturation when the generators $f_i$ of $I$ are known with finite precision, as in arithmetic applications.

In this context, we use the work of Caruso, Vaccon and Verron in \cite{MR4007445, MR4398768, MR4488860}, who define Gr\"obner bases for ideals of convergent power series, studying their properties and proposing  algorithms. 
Gröbner bases allow us to compute the saturation \eqref{equation saturation of an ideal section introduction} in certain cases; moreover, the two notions are closely related (see Remark~\ref{rmk:saturation_vs_Grobner}).

We also highlight the work of Rabinoff \cite{MR2900439}, which uses the theory of toric varieties and tropical geometry to study the possible valuations of the zeros of a system of power series. Moreover, Theorem~$11.7$ in loc.\ cit. gives, in some cases, a formula for the multiplicities of isolated zeros with given valuation in terms of mixed volumes, generalizing the Bernstein–Khovanskii–Kushnirenko theorem (cf. \cite{MR435072, MR419433}). 
See also \cite{MR3279033} and \cite{KRZB} for arithmetic applications.

We conclude this introduction by describing the structure of the paper. In Section~\ref{section main results} we prove Theorem~\ref{main theorem section introduction} using Theorem~\ref{Theorem main}, a dimensionality result holding for all ideals. In Section~\ref{section saturation of an ideal} we describe a procedure to compute the saturation \eqref{equation saturation of an ideal section introduction} of an ideal. Finally, in Section~\ref{section example} we illustrate an application to a Thue equation, by first applying  Skolem's method to bound its solutions with the zeros of a system of power series in two variables, and then bounding such zeros with our results.

\subsubsection*{Acknowledgments} 

We have the pleasure to thank Umberto Zannier and Davide Lombardo for posing the question that started this work and Giovanni Marzenta for the useful discussions.

The first author is supported by the MIUR Excellence Department Project MatMod@TOV awarded to the Department of Mathematics, University of Rome Tor Vergata, by the ``Programma Operativo Nazionale (PON) ``Ricerca e Innovazione''  2014-2020, 
and the PRIN PNRR 2022 ``Mathematical Primitives for Post Quantum Digital Signatures''.

Both authors are supported by the ``National Group for Algebraic and Geometric Structures, and their Applications" (GNSAGA - INdAM).

\section{Main results}
\label{section main results}

We keep the notation as in Section~\ref{section introduction}.
Before stating the main results, we recall the Weierstrass division and preparation theorems in the context of $\calR_n$. These results are more often stated for power series in $\Tcal_{n}$, see \cite[Section~$2.2$, Lemma~$7$, Theorem~$8$, and Corollary~$9$]{MR3309387}.

\begin{Theorem}
\label{Weierstrass division and preparation theorem}
    Let $f$ be a non-constant power series in $\Rcal_{n}$. Then there exist an automorphism $\sigma$ of $\Rcal_{n}$, a constant $a\in R$, a unit $u\in\Rcal_{n}$, and a monic polynomial $\tilde{f}\in\Rcal_{n-1}[x_{n}]$ such that $\sigma(f)=a\cdot u\cdot\tilde{f}$.

    Suppose $f$ is a monic polynomial in $\Rcal_{n-1}[x_{n}]$ of degree $d$, and let $g$ be a power series in $\Rcal_{n}$. Then there exist a unique power series $q\in\Rcal_{n}$ and a polynomial $r\in\Rcal_{n-1}[x_{n}]$ of degree $<d$ such that $g=qf+r$.
\end{Theorem}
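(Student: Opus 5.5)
The plan is to deduce both parts from the corresponding statements over $\Tcal_n$ in \cite{MR3309387}, by keeping track of integrality.

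\emph{Preparation.} Let $f \neq 0$ be non-constant. Choose $a \in R$ with $v(a) = \min_I v(a_I)$, so that $f = a f'$ with $f' \in \Rcal_n$ and $\ol{f'} \neq 0$. If $\ol{f'}$ is a non-zero constant, then $f'$ is a unit of $\Rcal_n$: write $f' = c(1 - h)$ with $c \in R^\times$ and $h \in \mfrak\Rcal_n$ of positive Gauss valuation, and the geometric series $\sum h^j$ converges in $\Rcal_n$. In that case we take $\sigma = \mathrm{id}$, $u = f'$ and $\tilde f = 1$.

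Otherwise, as in the classical proof, we pick integers $1 \ll e_1 \ll \dots \ll e_{n-1}$ and take the $R$-automorphism $\sigma \colon x_i \mapsto x_i + x_n^{e_i}$ for $i<n$, $x_n \mapsto x_n$, whose inverse $x_i \mapsto x_i - x_n^{e_i}$ also preserves $\Rcal_n$. For a suitable choice of the $e_i$, the polynomial $\ol{\sigma(f')}$ is a unit of $k$ times $x_n^d$ plus terms of lower $x_n$-degree; in the terminology of \cite{MR3309387}, $\sigma(f')$ is $x_n$-distinguished of degree $d$. Bosch's Weierstrass preparation theorem in $\Tcal_n$ then gives $\sigma(f') = u \tilde f$ with $u \in \Tcal_n^\times$ and $\tilde f \in \Tcal_{n-1}[x_n]$ monic of degree $d$.

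\emph{Integrality.} The main obstacle is to check that $u$ and $\tilde f$ lie in $\Rcal_n$ and that $u$ is a unit there. The cleanest route is to observe that Bosch's construction, which works with the Gauss norm, stays within the unit ball. Concretely, one applies division by $\sigma(f')$ to $x_n^d$ and shows that the quotient and remainder have norm $\le 1$. Then $\tilde f$ has norm $1$ with monic reduction, and $u = \sigma(f')/\tilde f$ satisfies $|u| = |u^{-1}| = 1$. An element of $\Rcal_n$ whose inverse in $\Tcal_n$ also has Gauss norm $\le 1$ is a unit of $\Rcal_n$.

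\emph{Division.} Let $f$ be monic of degree $d$ in $\Rcal_{n-1}[x_n]$. Such an $f$ is $x_n$-distinguished of degree $d$ with $|f| = 1$. Bosch's division theorem in $\Tcal_n$ gives a unique $q \in \Tcal_n$ and $r \in \Tcal_{n-1}[x_n]$ of degree $<d$ with $g = qf + r$ and $|g| = \max(|q|,|r|)$. Since $|g| \le 1$, both $q$ and $r$ have integral coefficients, so $q \in \Rcal_n$ and $r \in \Rcal_{n-1}[x_n]$. Uniqueness in $\Rcal_n$ follows from uniqueness in $\Tcal_n$.

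If one prefers a self-contained argument for division, one can instead reduce modulo $\mfrak$, where Euclidean division by the monic $\ol f$ holds, and lift by successive approximation. Write $g = q_0 f + r_0 + \epsilon g_1$ with $v(\epsilon) > 0$ and iterate; the corrections converge because the Gauss valuation of the error increases. The subtle point in the non-discrete case is that the error valuations must tend to $+\infty$. This is arranged by working modulo an ideal $\pi^{j} R$ with a fixed $\pi \in \mfrak$ and using that $g \bmod \pi$ is a polynomial.
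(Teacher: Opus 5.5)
Your proof is correct, but it is not what the paper does. The paper gives no argument of its own: it cites integral versions of division and preparation from \cite[Theorems 2.1, 3.1 and 4.1]{MR607075}, and mentions Bosch's $\Tcal_n$-versions \cite{MR3309387} only as the more common formulation.

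You instead deduce the $\Rcal_n$-statements from Bosch's. You use only four facts: $\Rcal_n$ is the closed unit ball for the Gauss norm; the Gauss norm is multiplicative; an element of $\Rcal_n$ with non-zero constant reduction is a unit of $\Rcal_n$ (the supremum of the $|a_I|$ is attained, so the geometric series converges even for non-discrete $v$); and Weierstrass division satisfies the norm identity $|g|=\max(|q|\,|f|,|r|)$.

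For preparation, your sketch becomes airtight once you note the following. By uniqueness of division, Bosch's factorization $\sigma(f')=u\tilde f$ yields exactly the division $x_n^d=u^{-1}\sigma(f')+(x_n^d-\tilde f)$. The norm identity then gives $|u^{-1}|\le 1$ and $\tilde f\in\Rcal_{n-1}[x_n]$. Multiplicativity, together with $|\sigma(f')|=1$, gives $|u|=1$, so $u\in\Rcal_n^\times$.

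The citation buys brevity. Your route buys transparency: the integral theorems become formal consequences of the standard Tate-algebra ones, uniformly for non-discrete valuations, with the automorphism of the explicit form $x_i\mapsto x_i+x_n^{e_i}$.

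In your optional self-contained division argument, existence is fine: divide a polynomial $P\equiv g \pmod{\pi\Rcal_n}$ by the monic $f$ in $\Rcal_{n-1}[x_n]$ and sum the $\pi$-adic corrections. Uniqueness, however, is not addressed there and needs a separate step. For example, rescale a relation $qf+r=0$ so that $\max(|q|,|r|)=1$ and reduce modulo $\mfrak$, where division by the monic $\ol{f}$ is unique.
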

\begin{proof}
    See \cite[Theorems 2.1, 3.1 and 4.1]{MR607075}.  
\end{proof}

A consequence of this theorem is the computation of the Krull dimension of $\calR_n$.

\begin{Proposition}
\label{proposition Krull dimension of Kn}
    The ring $\Rcal_{n}$ has Krull dimension $n+1$.
\end{Proposition}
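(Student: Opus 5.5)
We prove the two inequalities $\dim \Rcal_n \ge n+1$ and $\dim \Rcal_n \le n+1$ separately, by induction on $n$, using Theorem~\ref{Weierstrass division and preparation theorem} for the upper bound. For $n=0$ we have $\Rcal_0=R$, which has Krull dimension $1$, as recalled in Section~\ref{section introduction}.

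\textbf{Lower bound.} We exhibit an explicit chain of primes. Let $\pfrak_i=(x_1,\dots,x_i)\subset\Rcal_n$ for $0\le i\le n$, and consider
\begin{equation*}
0=\pfrak_0\subsetneq\pfrak_1\subsetneq\cdots\subsetneq\pfrak_n\subsetneq \pfrak_n+\mfrak\Rcal_n .
\end{equation*}
\begin{enumerate}[(i)]
\item Each $\pfrak_i$ is prime: the substitution $x_1=\dots=x_i=0$ shows that $\Rcal_n/\pfrak_i\cong\Rcal_{n-i}$, and $\Rcal_m$ is a domain because it sits inside $R[[x_1,\dots,x_m]]$.
\item The last ideal is prime: $\Rcal_n/(\pfrak_n+\mfrak\Rcal_n)\cong R/\mfrak=k$, a field.
\item All inclusions are strict, which is clear.
\end{enumerate}
This chain has length $n+1$.

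\textbf{Upper bound.} Let $0=\pfrak_0\subsetneq\pfrak_1\subsetneq\cdots\subsetneq\pfrak_\ell$ be a chain of primes in $\Rcal_n$. We must show $\ell\le n+1$.

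\emph{Step 1: choose a good element of $\pfrak_1$.} Pick a nonzero $f\in\pfrak_1$. If $f$ is a constant $a\in R\setminus\{0\}$, then $\pfrak_1$ contains $a$. In that case I would instead use that $\Rcal_n/a\Rcal_n\cong (R/aR)[x_1,\dots,x_n]$. This holds because the coefficients of $f$ tend to $0$, so modulo $a$ only finitely many survive.

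\emph{Step 2: the constant case.} The ring $R/aR$ has Krull dimension $0$, since its unique prime is $\mfrak/aR$. Hence $(R/aR)[x_1,\dots,x_n]$ has dimension $n$, because for a zero-dimensional ring the polynomial ring in $n$ variables has dimension $n$. The chain $\pfrak_1\subsetneq\cdots\subsetneq\pfrak_\ell$ descends to this quotient, so $\ell-1\le n$.

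\emph{Step 3: the non-constant case.} If $f$ is non-constant, apply Weierstrass preparation: after an automorphism $\sigma$, we have $\sigma(f)=a\,u\,\tilde f$. Since $\pfrak_1$ is prime and $u$ is a unit, either $a\in\sigma(\pfrak_1)$, which reduces to Step 2, or $\tilde f\in\sigma(\pfrak_1)$, with $\tilde f$ monic in $x_n$ over $\Rcal_{n-1}$. By Weierstrass division, $\Rcal_n/(\tilde f)$ is a finite free $\Rcal_{n-1}$-module, so $\Rcal_{n-1}\to\Rcal_n/(\tilde f)$ is an integral extension.

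\emph{Step 4: conclude by going up.} By incomparability, the images of the strict chain $\sigma(\pfrak_1)\subsetneq\cdots\subsetneq\sigma(\pfrak_\ell)$ contract to a strict chain of primes in $\Rcal_{n-1}$. By induction, $\Rcal_{n-1}$ has dimension $n$, so $\ell-1\le n$.

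In all cases $\ell\le n+1$, which completes the induction.

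\textbf{Main obstacle.} The delicate point is the dichotomy in Steps 1 to 3, namely that $\pfrak_1$ contains either a nonzero constant or a Weierstrass-monic element. Two facts must be checked carefully. First, the identification $\Rcal_n/a\Rcal_n\cong(R/aR)[x]$ must hold without any discreteness assumption on $v$. Second, $\dim R/aR=0$ must hold even though $R$ need not be Noetherian. Without Noetherianity we cannot quote the usual dimension formula for polynomial rings over Noetherian bases. We therefore rely only on the zero-dimensional case, where $\mathrm{Spec}$ of the polynomial ring is that of $k[x_1,\dots,x_n]$, because $\mfrak/aR$ is the nilradical of $R/aR$.
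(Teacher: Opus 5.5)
Your proof is correct and follows the paper's argument. You use the same explicit chain for the lower bound. For the upper bound you use the same dichotomy (a prime containing a nonzero constant, versus Weierstrass preparation giving a monic $\tilde f$ in the prime), followed by integrality of $\Rcal_n/(\tilde f)$ over $\Rcal_{n-1}$ and induction. The only difference is cosmetic: in the constant case the paper notes directly that $\pfrak_1\cap R=\mfrak$, so $\pfrak_1\supseteq\mfrak\Rcal_n$ and $\Rcal_n/\mfrak\Rcal_n\cong k[x_1,\dots,x_n]$, which avoids your detour through $(R/aR)[x_1,\dots,x_n]$ and zero-dimensional base rings.
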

\begin{proof}
    The chain of prime ideals
    \begin{equation*}
        (0) \subsetneq (x_{1}) \subsetneq (x_{1},\,x_{2}) \subsetneq \cdots \subsetneq (x_{1},\,x_{2},\,\dots,\,x_{n}) \subsetneq (\mfrak,\,x_{1},\,x_{2},\,\dots,\,x_{n}) ,
    \end{equation*}
    implies that $\dim \Rcal_n \ge n+1$. For the other inequality we proceed by induction on $n$. In the case $n=0$, we have $\Rcal_0 = R$ which is local of Krull dimension $1$. For the inductive step we assume the statement is true for $\Rcal_{n}$ and we prove it for $\Rcal_{n+1}$. It is enough to prove that, for each non-zero prime ideal $\pfrak$ of $\Rcal_{n+1}$, the quotient $\Rcal_{n+1}/\pfrak$ has dimension at most $n+1$.
    
    If $\pfrak\,\cap\, R \neq (0)$ we necessarily have $\pfrak\,\cap\, R = \mfrak$, implying
    \[
    \dim(\Rcal_{n+1}/\pfrak) \le \dim (\Rcal_{n+1}/\mfrak\cdot \Rcal_{n+1}) = \dim \bigl(k[x_1, \ldots, x_{n+1}]\bigr) = n+1 .
    \]
    Now suppose $\pfrak\,\cap\, R = (0) $. Since $\pfrak$ is non-zero, it contains a non-constant power series $f$. By the first part of Theorem \ref{Weierstrass division and preparation theorem}, up to automorphisms of $\calR_{n+1}$ and up to multiplying $f$ by a unit, we may assume that $f = a \tilde f$, where $a$ is a non-zero constant in $R$ and $\tilde f$ is a monic polynomial in $\Rcal_{n}[x_{n+1}]$. Since $\pfrak$ is a prime ideal and $\pfrak\,\cap\, R = (0) $, we deduce $\tilde{f}\in\pfrak$. By the second part of Theorem \ref{Weierstrass division and preparation theorem}, the ring $\Rcal_{n+1}/(\tilde f)$ is a finite extension of $\Rcal_{n}$, hence it has the same dimension. This, together with the inductive hypothesis, implies
    \[
    \dim(\Rcal_{n+1}/\pfrak) \le \dim \bigl(\Rcal_{n+1}/(\tilde f)\bigr) = \dim (\Rcal_{n}) = n+1 .
    \]
\end{proof}

Given an ideal $I$ of $\Rcal_{n}$, define its extended ideal as
\begin{equation}
\label{equation definition extended ideal section main results}
    I^{\mathrm{e}}\coloneqq I\cdot\Tcal_{n},
\end{equation}
so that the saturation of $I$ defined in \eqref{equation saturation of an ideal section introduction} can also be described as
\begin{equation*}
    I^{\ec} = I^{\mathrm{e}} \cap \Rcal_{n} .
\end{equation*}
We say that $I$ is saturated if $I=I^{\ec}$. Notice that
\[
V_R(I) = V_R(I^{\mathrm e}) = V_R (I^\ec).
\]

\begin{Remark}\label{rem:pointsandHom}
    The zeros $a$ in $R^{n}$ of an ideal $I \subset \calR_n$ are in bijection with $K$-algebra  morphisms $\Tcal_n/I^{\mathrm{e}} \to K$, with $R$-algebra morphisms $\Rcal_n/I \to R$ and with $R$-algebra morphisms $\Rcal_n/I^{\ec} \to R$. 
    Indeed a zero $a\in V_R(I)$ induces maps
    \begin{align*}
        \Rcal_{n} &\lto R , \quad f \longmapsto f(a_{1}, \, \dots, \, a_{n}) ,
        \qquad  \qquad
        \Tcal_{n} 
        \lto K , \quad  f \longmapsto f(a_{1}, \, \dots, \, a_{n}) ,
    \end{align*}
    with kernel containing $I^{\ec} \supset I$ in the first case, $I^{\mathrm{e}}$ in the second. All morphisms $\calT_n \to K$ are of this form by \cite[Section~$2.2$, Corollary~$13$]{MR3309387} (in particular all such morphisms are continuous), and the same is true for morphisms $\calR_n \to R$, since they extend to morphisms $\calR_n\otimes_R K = \calT_n \to K$.
    
    With the same motivation, if $L$ is any valued field extending $K$ and $\calO_{L}$ is its ring of integers, 
    then continuous $R$-algebra  morphisms $\Rcal_n/I \to \calO_{L}$ and continuous $K$-algebra  morphisms $\Tcal_n/I^{\mathrm{e}} \to L$  are in bijection with $V_{\calO_L}(I)$, that is, common zeros of $I$ in $\calO_L^n$.  
\end{Remark}

The following theorem provides a dimensional criterion for an ideal of $\Rcal_{n}$ to have finitely many common zeros in the valuation ring of a completion of an algebraic closure of $K$. We take the completion to ensure that automorphisms of $\calT_{n}$ induce bijections between zeros of ideals. For any saturated ideal $I \subset \calR_n$, the theorem also relates the dimension of $\calR_n/I$ with the dimension of the algebraic variety defined by its reduction modulo $\mfrak$.

\begin{Theorem}
\label{Theorem main}

Let $I$ be an ideal of $\Rcal_{n}$, let $I^{\ec}$ be its saturation, let $I^{\mathrm{e}}$ be its extended ideal in $\Tcal_{n}$, and let $\ol{I^{\ec}}$ be the reduction modulo $\mfrak$ of the saturation. If $I\cap R=(0)$, then
\begin{equation}
\label{equation dimension of quotient of Tn, Kn and Ln of main Theorem}
\dim\left(\Tcal_{n}/I^{\mathrm{e}}\right)
=\dim\bigl(k[x_1,\, \dots,\,x_{n}]/\ol{I^{\ec}}\bigr) = \dim\bigl(\Rcal_{n}/I^{\ec}\bigr)-1 .
\end{equation}
Moreover, for all ideals $I$ of $\Rcal_{n}$, the following conditions are equivalent:
\begin{enumerate}[label=\arabic*)]
\label{enumerate theorem main general finiteness}
\item \label{item1} the power series in $I$ have finitely many common zeros in $(\calO_{K^c})^n$, where $K^{c}$ is a completion of an algebraic closure of $K$ and $\calO_{K^c}$ is its ring of integers; 
\item \label{item2} the subvariety of $\A^n_k$ defined by $\overline{I^{\ec}}$ has dimension zero or is empty;
\item \label{item3} the $k$-vector space $k[x_1, \ldots, x_n]/\overline{I^{\ec}}$ is finite-dimensional;
\item \label{item4} the $R$-module $\Rcal_{n}/I^{\ec}$ is finitely generated;
\item \label{item5} the $K$-vector space $\Tcal_{n}/I^{\mathrm{e}}$ is finite-dimensional.
\end{enumerate}
\end{Theorem}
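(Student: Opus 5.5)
We prove the dimension formula \eqref{equation dimension of quotient of Tn, Kn and Ln of main Theorem} first and then derive the equivalences from it, handling the degenerate case $I\cap R\neq(0)$ separately.

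\textbf{Dimension formula.} Set $A=\Rcal_n/I^{\ec}$. By construction of the saturation, $A$ is $R$-torsion free. Moreover $A\otimes_R K=\Tcal_n/I^{\mathrm e}$ and $A\otimes_R k=k[x_1,\dots,x_n]/\ol{I^{\ec}}$. The plan is to run a Noether normalization argument in the style of the proof of Proposition~\ref{proposition Krull dimension of Kn}, by induction on $n$.

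If $I^{\ec}=0$, all three quantities are $n$, $n$ and $n+1$, by Proposition~\ref{proposition Krull dimension of Kn} and the classical dimension of $\Tcal_n$. Otherwise pick a nonzero $f\in I^{\ec}$. Dividing by a constant, which is allowed because $I^{\ec}$ is saturated, we may assume $\ol f\neq 0$. If $f$ is a constant, then $I\cap R\neq 0$, contradicting the hypothesis, so $f$ is non-constant. Theorem~\ref{Weierstrass division and preparation theorem} gives an automorphism $\sigma$ with $\sigma(f)=a u\tilde f$, and $a$ can be taken to be a unit because $\ol f\neq0$. By Weierstrass division, $\Rcal_n/(\tilde f)$ is finite and free over $\Rcal_{n-1}$, so $A$ is a finite torsion-free extension of $B=\Rcal_{n-1}/(I^{\ec}\cap\Rcal_{n-1})$.

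The ideal $J=I^{\ec}\cap\Rcal_{n-1}$ is again saturated, and by induction satisfies the formula. Finiteness then carries the formula to $A$:
\begin{itemize}
\item $\dim A=\dim B$ and $\dim A_K=\dim B_K$, by integrality (going up and incomparability).
\item $A\otimes k$ is finite over $B\otimes k$, so $\dim(A\otimes k)\le\dim(B\otimes k)$.
\end{itemize}

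The main obstacle is the reverse inequality for the special fibre, namely that $\ol{J}$ is the contraction of $\ol{I^{\ec}}$ up to nilpotents. Here torsion-freeness must be used. My first attempt would be to show that $\operatorname{Spec}(A\otimes k)\to\operatorname{Spec}(B\otimes k)$ is surjective. A prime of $B$ containing $\mfrak$ lies over a prime of $B$, which is lifted to $A$ by lying over. Lying over applies because $B\to A$ is injective: $J$ is the contraction of $I^{\ec}$, and $A$ is integral over $B$. Since $\mfrak\subset$ that prime, the lift lies in the special fibre.

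This gives $\dim(A\otimes k)=\dim(B\otimes k)$, and the relation $\dim A=\dim(A\otimes k)+1$ follows similarly. Concretely, every minimal prime of $A$ avoids $R\setminus 0$ by torsion-freeness, and maximal chains must end in primes containing $\mfrak$ (maximal ideals of $\Rcal_n$ contain $\mfrak$, as in the proof of Proposition~\ref{proposition Krull dimension of Kn}). This forces $\dim A=\dim(A_K)+1$, which one can alternatively read off via the induction along with $\dim(A\otimes k)=\dim(B\otimes k)$.

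\textbf{Equivalences.} The case $I\cap R\neq0$ is degenerate. Then $I^{\ec}=\Rcal_n$, all conditions hold trivially, and there are no zeros. So assume $I\cap R=(0)$.
\begin{itemize}
\item $2)\Leftrightarrow3)$ is classical commutative algebra over $k[x_1,\dots,x_n]$.
\item $3)\Leftrightarrow5)$ follows from the dimension formula together with the fact that affinoid algebras (respectively finitely generated $k$-algebras) are finite-dimensional exactly when they have dimension $\le0$.
\item $4)\Rightarrow5)$ is immediate by tensoring with $K$.
\item $3)\Rightarrow4)$: lift a $k$-basis of $A\otimes k$ and apply a Nakayama-type argument for $\mfrak$-adically complete torsion-free modules. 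Since $\mfrak$ may be non-finitely generated, I would rather deduce it from the normalization above: in the zero-dimensional case the induction descends to $n=0$, so $A$ is finite over $R$.
\item $5)\Rightarrow1)$: points in $\calO_{K^c}^n$ correspond to continuous $K$-algebra maps $\Tcal_n/I^{\mathrm e}\to K^c$ by Remark~\ref{rem:pointsandHom}, and there are at most $\dim_K$ of these.
\item $1)\Rightarrow5)$: if $\dim\Tcal_n/I^{\mathrm e}\ge1$, base change to $K^c$ and use the normalization to get a finite surjection onto a positive-dimensional Tate algebra. That Tate algebra has infinitely many $\calO_{K^c}$-points, which lift by integrality, with lifted coordinates still of valuation $\ge0$. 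This contradicts 1).
\end{itemize}
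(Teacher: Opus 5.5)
Your proof is correct. It shares the paper's skeleton for the dimension formula: Weierstrass preparation puts a monic $\tilde f$ in $I^{\ec}$, so $A=\Rcal_n/I^{\ec}$ is finite over $B=\Rcal_{n-1}/(I^{\ec}\cap\Rcal_{n-1})$, and everything reduces to $\dim(A\otimes k)=\dim(B\otimes k)$, i.e.\ to $\ol{I^{\ec}}\cap k[x_1,\dots,x_{n-1}]\subseteq\sqrt{\ol{I^{\ec}\cap\Rcal_{n-1}}}$. You prove this key inclusion by a different lemma.

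\textbf{The key inclusion.} The paper does it by hand. It replaces a lift $\tilde g\in I^{\ec}$ of $g$ by its remainder modulo $\tilde f$, so that the coefficients of $x_n^i$ for $i\ge1$ lie in $\mfrak\Rcal_{n-1}$. It then shows that the resultant of $\tilde f$ and $\tilde g$ lies in $I^{\ec}\cap\Rcal_{n-1}$ and reduces to $g^d$. You instead use lying over for the injective integral map $B\to A$: primes of $B$ containing $\mfrak$ lift to primes of $A$ containing $\mfrak$. Hence $\Spec(A\otimes k)\to\Spec(B\otimes k)$ is surjective, and the kernel of $B\otimes k\to A\otimes k$ is nil. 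This is the standard fact $\mathfrak{a}A\cap B\subseteq\sqrt{\mathfrak{a}}$ for integral extensions. Your route is shorter and avoids both the degree reduction and the resultant; the paper's computation gives an explicit exponent $d=\deg\tilde f$. Saturation enters your step only through $\tilde f\in I^{\ec}$, i.e.\ through integrality.

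\textbf{The equivalences.} The paper runs the cycle 1)$\Rightarrow$2)$\Rightarrow$3)$\Rightarrow$4)$\Rightarrow$5)$\Rightarrow$1), proving 1)$\Rightarrow$2) by lifting zeros one variable at a time via going-up. You read off 3)$\Leftrightarrow$5) directly from the dimension formula. The cost is quoting that zero-dimensional affinoid algebras are finite-dimensional over $K$. Your 1)$\Rightarrow$5) is the same lifting argument, performed along the whole normalization at once.

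\textbf{Two small repairs.}
\begin{enumerate}
\item Your ``concretely'' justification of $\dim A=\dim A_K+1$ only yields $\ge$. A maximal chain may contain several primes over $\mfrak$, as $0\subset\mfrak\Rcal_n\subset(\mfrak,x_1)\subset\cdots$ shows. It is your inductive derivation that actually proves the equality.
\item In 1)$\Rightarrow$5) the base change to $K^c$ is unnecessary, and it would need extra input such as invariance of dimension under completed base change. Iterating your Weierstrass step gives $\Rcal_d\hookrightarrow A$ finite with $d=\dim A_K\ge1$. The infinitely many $\calO_{K^c}$-points of $\Rcal_d$ then lift by lying over, extension of embeddings into the algebraically closed $K^c$, and integral closedness of $\calO_{K^c}$, exactly as in the paper.
\end{enumerate}
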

\begin{proof}
We prove \eqref{equation dimension of quotient of Tn, Kn and Ln of main Theorem} by induction on $n$. In the case $n=0$, we have $\Rcal_n = R$, $\Tcal_n = K$ and $I = (0) = I^{\ec}$, hence \eqref{equation dimension of quotient of Tn, Kn and Ln of main Theorem} is true. We now assume that Equation~\eqref{equation dimension of quotient of Tn, Kn and Ln of main Theorem} holds for ideals of $\Rcal_{n-1}$ and we prove that it holds for $I \subset \Rcal_n$. 

If $I=(0)$, then the statement coincides with Proposition~\ref{proposition Krull dimension of Kn}, together with \cite[Chapter~2, Proposition~17]{MR3309387}. If $I \neq (0)$, then we pick a non-zero element $f \in I$, which is non-constant by hypothesis. Since an automorphism of $\Rcal_{n}$ induces an automorphism of $\Tcal_{n}$ and of $k[x_1, \, x_{2}, \, \dots, \, x_n]$, and since a ring automorphism preserves the Krull dimension, by the first part of Theorem~\ref{Weierstrass division and preparation theorem} we can suppose that $f = a \cdot \tilde f$, where $a$ is a non-zero element of $R$ and $\tilde f$ is a monic polynomial in $\Rcal_{n-1}[x_n]$. By the  definition in~\eqref{equation saturation of an ideal section introduction}, we deduce that $\tilde{f}$ belongs to $I^{\ec}$.
The second part of Theorem~\ref{Weierstrass division and preparation theorem} tells us that the ring extension 
$\Rcal_{n-1} \subset \Rcal_n/(\tilde f) $  is finite.  
In particular, the following ring extensions are also finite
\begin{equation} 
\label{eq:finite_ext_0}
\begin{aligned}
\Rcal_{n-1}/I^{\ec}_{n-1} &\subseteq \Rcal_{n}/I^{\ec} , & &\text{where } I^{\ec}_{n-1} \coloneqq I^{\ec}\cap\Rcal_{n-1} ,  \\
\Tcal_{n-1}/I^{\mathrm{e}}_{n-1} &\subseteq \Tcal_{n}/I^{\mathrm{e}} 
, & &\text{where } I^{\mathrm{e}}_{n-1} \coloneqq I^{\mathrm{e}} \cap \Tcal_{n-1} ,  \\
k[x_1, \ldots, x_{n-1}]/ J &\subseteq k[x_1, \ldots, x_{n}] /\ol{I^{\ec}} , & &\text{where } J \coloneqq \ol{I^{\ec}}\cap k[x_1, \ldots, x_{n-1}] .
\end{aligned}
\end{equation}

Since finite extensions preserve Krull dimension it is enough to prove that 
\begin{equation}  \label{eq:inductive_dims}
\dim\left(\Tcal_{n-1}/I^{\mathrm{e}}_{n-1}\right)
= \dim\bigl(\Rcal_{n-1}/I^{\ec}_{n-1}\bigr)-1 =\dim\bigl(k[x_1,\,\dots,\,x_{n-1}]/J \bigr) 
.
\end{equation}
Since  $I^{\ec}$ is saturated, also $I^{\ec}_{n-1}$ is saturated, and its extension in $\calT_{n-1}$ is equal to $I^{\mathrm{e}}_{n-1}$. In particular, the first equality of \eqref{eq:inductive_dims} is true by inductive hypothesis. We are left to prove the second equality which, using the inductive hypothesis, is equivalent to 
\begin{equation}  \label{eq:J_inductive}
\dim\bigl(k[x_1,\,\dots,\,x_{n-1}]/J \bigr) 
= \dim\left(k[x_1,\,\dots,\,x_{n-1}]/\ol{I^{\ec}_{n-1} }  \right) .
\end{equation}
By definition we have $\ol{I^{\ec}_{n-1} } \subset J$, which gives one inequality. For the other inequality, we prove that the radical of $J$ is contained in the radical of $\ol{I^{\ec}_{n-1}}$. We prove it by showing that $\ol{I^{\ec}_{n-1}}$ contains all the polynomials $g^d$, with $g$ in $J$ and $d$ the degree of $\tilde f$.  By definition, if $g \in k[x_1, \ldots, x_{n-1}]$  lies in $J$, then it is the reduction of some $\tilde g \in  I^{\ec}$ and, up to applying Theorem \ref{Weierstrass division and preparation theorem} and taking the remainder modulo $\tilde f$, we can suppose that $\tilde g$ is a polynomial in $x_n$ of degree $e<d$ (indeed for any power series $g' \in \calR_n$ reducing to $g$, if we apply Theorem \ref{Weierstrass division and preparation theorem} and we write $g' =  q\tilde f + r$, taking the reduction and comparing the degree in $x_n$ we deduce that $q$ reduces to $0$ modulo~$\mfrak$, hence $r$ reduces to $g$). 
Then, the resultant $h$ of $\tilde g$ and $\tilde f$, considered as polynomials in $\calR_{n-1}[x_n]$, is an element of $I^{\ec}_{n-1}$: indeed $h$ lies $\calR_{n-1}$ and, by \cite[Chapter IV, $\S8$, start of page 202]{LangAlgebra}, it is a $\calR_n$-linear combination of $\tilde f$ and $\tilde g$. 
Moreover, using the definition of the resultant, we can write  $h$ as the determinant of a matrix with entries depending on the coefficients of $\tilde f$ and $\tilde g$: since $\tilde f$ is monic of degree~$d$ and $\tilde g = a_{e}x_n^{e} + \ldots + a_0 $ with $a_i\in \calR_{n-1}$ such that $a_0 \equiv g \pmod \mfrak$ and $a_i\equiv 0 \pmod \mfrak$ for $i\ge 1$, we see that $h \equiv a_0^d \equiv g^d \pmod \mfrak$. In particular  $\ol{I^{\ec}_{n-1}}$ contains $\ol h =  g^d$. 

We now prove the equivalence of conditions \ref{item1} ... \ref{item5}. 
If $I$ contains a constant $r\in R\setminus\{0\}$, then all conditions are satisfied, hence we can suppose that $I \cap R = (0)$.

We start from the implication $1)\implies  2)$. 
Again we proceed by induction on $n$. 
When $n=1$, we have two cases. If $I=(0)$ both conditions are false. Otherwise, by Theorem \ref{Weierstrass division and preparation theorem}, we can suppose that $I$ contains an element $a \cdot \tilde f$ with $a\in R \setminus \{0\}$ and $\tilde f$ a monic polynomial in $R[x_1]$. Then, $\tilde f$ belongs to $I^{\ec}$ and both conditions are true since polynomials have finitely many roots.

For the inductive step, we can assume that the ring extensions in \eqref{eq:finite_ext_0} are finite.
In particular, using Equation \eqref{eq:J_inductive}, the subvariety of $\A^n_k$ defined by $\ol{I^{\ec}}$ has the same dimension as the subvariety of $\A^{n-1}_k$ defined by $\ol{I^{\ec}_{n-1}}$.  
Assume by contradiction that the former has dimension at least $1$. Then so does the latter, so by the inductive hypothesis $I^{\ec}_{n-1} \subset \Rcal_{n-1}$ has infinitely many common zeros $z = (z_1,\ldots, z_{n-1}) \in (\calO_{K^c})^{n-1}$. We show each such zero lifts to a common zero $(z_1,\ldots, z_n)$ of $I^{\ec}$, contradicting the finiteness of $V_{\calO_{K^c}}(I) = V_{\calO_{K^c}}(I^{\ec})$.
For each such $z$, the set $P_z = \{ f \in \Rcal_{n-1}: f(z)=0\}$ induces a prime ideal of $A_{n-1}= \Rcal_{n-1}/I^{\ec}_{n-1}$. By the going-up theorem  \cite[Chapter~2, Theorem~5]{MR1011461}, this extends to a prime ideal $P$ of $A_n =  \Rcal_{n}/I^{\ec}$. By construction, $A_{n-1}/P_z$ embeds into $\calO_{K^c}$ by sending $x_i \mapsto z_i$ for $i = 1, \ldots, n-1$.  Since $A_n /P$ is an integral domain finite over $A_{n-1}/P_z$, this embedding extends to $A_{n}/P  \into \calO_{K^c}$ (indeed we have an embedding of the fraction field of $A_{n-1}/P_z$ in $K^c$ which, since $K^c$ is algebraically closed, can be extended to an embedding of the fraction field of $A_n/P$ in $K^c$ and, by integrality property, the image of $A_n/P$ is contained in $\calO_{K^c}$). Thus, with $z_n$ the image of $x_n$, the point $(z_1, \ldots, z_{n-1}, z_n) \in (\calO_{K^c})^n$ is a zero of $I^{\ec}$.

Now the implication $2)\Longrightarrow 3)$. If the subvariety of $\A^n_k$ defined by $\overline{I^{\ec}}$ has dimension zero, it possesses only a finite number of points, hence the ring $k[x_1, \ldots, x_n]/\overline{I^{\ec}}$ has only a finite number of maximal ideals, which implies that it is Artinian and has finite dimension as a $k$-vector space. 

Now $3)\Longrightarrow 4)$. When the valuation is discrete, the ring $\calR_n$ is Noetherian (see \cite[Chapter~0, Proposition~$7.5.4$]{MR3075000}), and this implication follows from Nakayama's lemma. In the general case, we don't have Noetherianity, but we can proceed by induction on $n$, as for (\ref{equation dimension of quotient of Tn, Kn and Ln of main Theorem}). The case $n=0$ is immediate. In the inductive step, we look at the ring extensions in \eqref{eq:finite_ext_0}, and  we can again suppose that they are finite.
In particular, $k[x_1, \ldots, x_{n-1}] / J$,  being a subspace of $ k[x_1, \ldots, x_{n}]/ \ol{I^{\ec}}$, is finite-dimensional over $k$, hence Artinian, and the same holds for $k[x_1,\,\dots,\,x_{n-1}]/\ol{I^{\ec}_{n-1} }$ by Equation \eqref{eq:J_inductive}. Then, by inductive hypothesis, $\Rcal_{n-1}/I^{\ec}_{n-1}$ is finite over $R$. Since $\Rcal_{n}/I^{\ec}$ is finite over $\Rcal_{n-1}/I^{\ec}_{n-1}$, it is also finite over $R$ as desired. 

The implication $4) \Longrightarrow 5)$ is a base change. 

Finally the implication $5) \Longrightarrow 1)$. 
Since $\calT_n/ I^{\mathrm{e}}$ is finite-dimensional over $K$, it is also Artinian and its quotients by maximal ideals are finite extensions of $K$.
This implies that there are only finitely many ring homomorphisms from $ K \langle x_1,\ldots, x_n \rangle/ I^{\mathrm{e}}$ to $K^c$ and in particular finitely many common zeros in $V_{\calO_{K^c}}(I^{\rm e}) = V_{\calO_{K^c}}(I)$, since by Remark~\ref{rem:pointsandHom} each such zero determines a ring homomorphism.
\end{proof}

In the above theorem,  Item \ref{item3} gives a necessary and sufficient criterion for an ideal of power series to have finitely many zeros, in terms of the reduction of the saturation of the ideal. If we substitute the saturation of the ideal with the ideal itself, the criterion is only sufficient.  
We now prove Theorem \ref{main theorem section introduction}, which provides, under this hypothesis, a bound on the number of zeros, in terms of the reduction of the ideal.

\begin{proof}[Proof of Theorem \ref{main theorem section introduction}]
Since $\ol{A}$ is a finite-dimensional $k$-vector space, it is an Artinian ring and we have 
\begin{equation} \label{eq:decomposiotionAbar}
\ol{A}\cong\prod_{m \in \mathrm{Specm} (\ol A)}\ol{A}_{m} ,
\end{equation}
with the product taken over all the finitely many maximal ideals of $\ol{A}$. This implies the second inequality in \eqref{eq:bound_zeroes}. 

Now define  $A \coloneqq \calR_n/I$, with $I$ the ideal generated by $f_1, \ldots, f_r$, so that $\ol A = A \otimes k$. 
To prove the first inequality in Equation \eqref{eq:bound_zeroes}, we can suppose that $I = I^{\ec}$, since $I$ and $I^{\ec}$ have the same common zeros and if we substitute $\ol A$ with $\ol B = k[x_1, \ldots, x_n]/\ol{I^{\ec}}$ the quantity $ \sum_m \dim_k(\ol{A}_m)$ in Equation \eqref{eq:bound_zeroes} decreases. 
Then, assuming that $I$ is saturated, we prove that the decomposition in \eqref{eq:decomposiotionAbar} lifts to $A$, as in Equation \eqref{eq:A_product_locals}  below.

By the equivalence of conditions \ref{item3} and \ref{item4} in Theorem \ref{Theorem main},  we know that the ring $A$ is finite as an $R$-module. 
Actually $A$ is a free $R$-module since any minimal set of generators is linearly independent:  the hypothesis ``$I$ is saturated'' translates as ``$A$ is a torsion-free $R$-module'', hence if $a_1, \ldots, a_s$ is a minimal set of generators, any non-trivial relation $\sum r_i a_i =0$, with $r_i\in R$ and, say, $v(r_1) \le \ldots \le v(r_s)$, would yield a relation $r_1(a_1 + \sum_{i \ge 2} (r_i/r_1) a_i) =0$, implying that $a_1$ is not necessary as a generator.

Since $A$ is a finite, free $R$-module and $R$ is complete, then, for any element $\pi\in \mfrak$, the ring $A$ is complete with respect to the $\pi$-adic topology, hence it is canonically isomorphic to the projective limit of $A/\pi^n A$. 
We deduce that
\begin{align}\label{eq:A_first_product_locals}
A &= \underset{n}{\varprojlim} \, \,\frac{A}{\pi^n A} = \underset{n}{\varprojlim} \left( \prod_{m \in \mathrm {Specm} (\ol A)} \left(\frac{A}{\pi^n A} \right)_m\right) \\
&= \underset{n}{\varprojlim} \left( \prod_{m \in \mathrm {Specm} (\ol A)} \frac{A_m}{\pi^n A_{m}} \right)  =
\prod_{m \in \mathrm {Specm} (\ol A)} \left( \underset{n}{\varprojlim}  \left(\frac{A_m}{\pi^n A_{m}} \right)\right) ,
\end{align}
where we identify the maximal ideals of $\ol A$ with their inverse images in $A/\pi^n A$ and in $A$. To motivate the second equality, we can argue by Noetherianity of $B = A/\pi^n A$ when the valuation is discrete. When the valuation is not discrete, we notice that the ideal $\mfrak B$ is locally nilpotent, i.e., all its elements are nilpotent, which implies that all the primes of $B$ actually come from $B/\mfrak B = \ol A$, which is Artinian. In particular $B$ has finitely many prime ideals, all of which are maximal, and the Jacobson radical is equal to the nilradical, which is locally nilpotent. 
Applying \cite[\href{https://stacks.math.columbia.edu/tag/00JA}{Lemma 00JA}]{stacks-project} we get the second equality in \eqref{eq:A_first_product_locals}.

The above chain of canonical isomorphisms writes $A$ as a product of finitely many local rings, hence as the product of its finitely many localizations at maximal ideals. Moreover, these maximal ideals are the inverse images of the maximal ideals of $\ol A$, hence 
\begin{equation}\label{eq:A_product_locals}
A =  \prod_{m \in \mathrm{Specm} (\ol A)} A_m .
\end{equation}

We now bound the number of zeros of $I$. By Remark \ref{rem:pointsandHom}, they  are in bijection with $R$-algebra morphisms $ A \to R$, hence
\[
\# V_{R}\bigl(\{f_{1},\,f_{2},\,\dots,\,f_{r} \}\bigr) =  \# \,
\Hom(A,\,R)= \sum_{m \in \mathrm{Specm} (\ol A)}   \# \, \Hom(A_{m} , R) .  
\]
Notice that, if $\ol A/m$ is not (canonically) isomorphic to $k$ as a $k$-algebra, then $\#\Hom(A_{m} , R) = 0$: indeed each $R$-algebra morphism $\phi \colon A_m \to R$ reduces to a $k$-algebra morphism $ \phi {\otimes_R} k \colon \ol A_m  \to k$, which necessarily factors through a map $\ol A_m/m \to k$ which is injective since $\ol A_m/m$ is a field and surjective since it is a map of $k$-algebras.
Hence we have
\begin{equation}\label{eq:usata_1_volta}
\# V_{R}\bigl(\{ f_{1},\,f_{2},\,\dots,\,f_{r} \}\bigr) =  \sum_{m}   \#  \Hom(A_m , R) ,  
\end{equation}
where the sum is taken over the maximal ideals~$m \subset \ol A$ such that $\ol{A}/m=k$.

By Equation \eqref{eq:A_product_locals}, $A_m$ is a finite, torsion-free $R$-module, hence, by the same argument we used for $A$, it is a free $R$-module of finite rank. In particular, $A_m\otimes_{R}K$ is a finite $K$-algebra, hence Artinian, and by writing it as a product of local Artinian $K$-algebras, we see that the number of $K$-algebra morphisms $A_m\otimes_{R}K \to K$ is at most equal to
the $K$-dimension of $A_m\otimes_{R}K$. 
Since each morphism $A_m \to R$ induces a $K$-algebra morphism $A_m\otimes_{R}K \to K$, using that $A_m$ is free over $R$, we have the following inequalities
\[
\begin{aligned}
\# \, \Hom(A_{m}, R) & \le  \# \, \Hom(A_{m} \otimes_R K, K) 
\le  \dim_K (A_{m} \otimes_R K)  = \rank_R A_m =  
\\ & = \dim_k (A_m \otimes k) =\dim_k (\ol A_m) , 
\end{aligned}
\] 
which, together with \eqref{eq:usata_1_volta}, give the first inequality in Equation \eqref{eq:bound_zeroes}.
\end{proof}

\numberwithin{equation}{subsection}
\section{Computing saturation with approximated information}
\label{section saturation of an ideal}

In this section we restrict to the case where the valuation is discrete, that is, the maximal ideal $\mathfrak{m}$ 
is generated by a single uniformizer $\pi$.
Under this assumption, the rings $\mathcal{R}_n$ are Noetherian, by \cite[Chapter~0, Proposition~$7.5.4$]{MR3075000}.

Motivated by the results of the previous section, we investigate how to compute generators for the saturation $I^{\ec}$ of an ideal $I \subset \calR_n$. In the context of (convergent) power series, exact computations are obstructed by the infinite memory needed to represent general elements of $\calR_n$. A natural approach is to work with finite $\pi$-adic precision, thus representing a power series in $\calR_n$ as 
$f(x_1, \ldots, x_n) = P(x_1, \ldots, x_n) + O(\pi^N)$
with $P$ a polynomial with coefficients in $R$ or in a dense subset (e.g. $\Z$, which is dense in $\Z_p$). 
We are then interested in the following problem.

\begin{Prob}\label{prob:compute_saturation}
	Suppose that the valuation on $K$ is discrete, with uniformizer $\pi$.
	Given an ideal $I \subset \calR_n$, generated by power series $f_1, \ldots, f_r$, known with precision $O(\pi^N)$, and given a precision level $M \le N$, compute, if possible, generators for $I^{\ec}$ with precision $O(\pi^M)$.
\end{Prob}

It is not always possible to solve the above problem. Even when $M=1 \ll N$, there are cases where the information modulo~$\pi^N$ is not enough to determine the reduction of $I^{\ec}$. Consider, for example, $I = (f_1, f_2) \subset \calR_2$ with both
$f_1$ and $f_2$ being equal to $x + O(\pi^N)$: this approximation might arise from $f_1 = f_2=x$ or from $f_1 = x$ and $f_2 = x + \pi^{a} y$ for some integer $a\ge N$; in the first case we would have $\ol{I^{\ec}} = (x)$, while in the second case $\ol{I^{\ec}} = (x,y)$. 

Problem~\ref{prob:compute_saturation} is easy if $I$ is generated by only one power series. Suppose $I = (f_1)$, with $f_1 = p_1(x_1, \ldots, x_n)+O(\pi^N)$ for some polynomial $p_1$, and let $m$ be the minimum valuation among the coefficients of $p_1$. Then $I^{\ec}$ is determined modulo $\pi^M$ if and only if $M\le N-m$, and in that case $I^{\ec} = (f_1/\pi^m) = (p_1/\pi^m + \calO(\pi^M))$. 

We are especially interested in Problem~\ref{prob:compute_saturation} when $M=1$, since it allows us to apply Theorem~\ref{Theorem main}, and to check whether $I$ has finitely many zeros in $(\calO_{K^c})^n$, with $K^{c}$ the completion of an algebraic closure of $K$, and potentially bound the number of zeros using Theorem \ref{main theorem section introduction}.  
This is also desirable in arithmetic applications, such as Chabauty's and Skolem's methods, where one bounds the solutions of a Diophantine problem with the zeros of power series, which are usually computed with finite precision. 

\begin{Remark}
	The saturation of an ideal $I$ may be interpreted geometrically as a Zariski closure. Indeed, if $X$ is the $R$-scheme $\Spec(\Rcal_{n}/I)$, then its generic fiber is $X_K = \Spec \bigl((\Rcal_{n}/I) \otimes K\bigr) = \Spec (\Tcal_{n}/I^{\mathrm{e}}) $, and  the scheme $Z = \Spec(\Rcal_{n}/I^{\ec})$ is supported on the Zariski closure of $X_K$ inside $X$, fitting into the following commutative diagram
	\begin{center}
		\begin{tikzcd}
			X_K \arrow{rr} \arrow{d} && Z = \ol{X_K}  \arrow[r] \arrow{dr} & X \arrow{d} 
			\\
			\Spec(K) \arrow{rrr} & & & \Spec(R) 
		\end{tikzcd}
	\end{center}
\end{Remark}

\begin{Remark}[Relation with Gr\"obner bases] \label{rmk:saturation_vs_Grobner}
Problem~\ref{prob:compute_saturation} is also relevant in the context of Gr\"obner bases of ideals $J$ of $\calT_n$, as defined in \cite[Definition~$2.16$]{MR4007445}.

Suppose $J$ is an ideal of $\calT_n$ generated by power series  $f_1, \ldots, f_r \in \calR_n$ and let $I$ be the ideal of $\calR_n$ generated by the same power series.
By Proposition~$2.25$ in loc.cit., a Gr\"obner basis of $J$, up to multiplying by scalars, is also a set of generators of $I^{\ec}$.
Conversely, given generators $g_i$'s for the saturation of $I$, by Propositions $2.25$ and $2.28$ in \cite{MR4007445}, we can find a Gr\"obner basis of $J$ by first computing a Gr\"obner basis 
$\ol{h_j} = \sum_i \ol{\mu_{ij}} \cdot \ol {g_i}$  for $\ol{I^{\ec}}$ in the polynomial ring $k[x_1, \ldots, x_n]$ and then lifting them to $h_j = \sum_i \mu_{ij} g_i$.

These equivalences also hold when working with finite precision and indeed, as noticed in \cite{MR4007445} after the proof of Theorem~$3.8$, 
it is not always possible to determine (the reduction of) a Gr\"obner basis starting from approximate information in $\calT_n$.
\end{Remark}

\subsection{Saturation and syzygies}

We start by looking at  Problem~\ref{prob:compute_saturation} more ``theoretically'', i.e.,  working with exact power series. 
By definition, the saturation $I^{\ec}$ contains those power series $g$ such that $\pi^l g$ lies in $I$, and more precisely in $I \cap (\pi^l)$, for some $l\in \Z_{\ge 0}$.
In other words, if we define the ideals
\begin{equation}
\label{equation ideals Icerchietto n}
I_{l}\coloneqq  \frac{1}{\pi^{l}} (I\cap  \pi^{l} \calR_n )  = \left( I :  \pi^{l} \calR_n \right) ,
\end{equation}
we have
\begin{equation}
\label{equation catena Il}
I=I_{0} \subseteq I_{1} \subseteq I_{2} \subseteq I_{3} \subseteq \cdots   \subseteq \bigcup_{l\in\N} I_{l} = I^{\ec} .
\end{equation}
Since $\calR_n$ is Noetherian, the chain of ideals $\{I_{l}\}_{l}$  stabilizes after a finite number of steps. Indeed, it stabilizes at the first index $m$ such that  $I_m = I_{m+1}$ since
we have the recursion
\begin{equation}\label{eq:one_step_saturation}
I_{l+1} =   \frac{1}{\pi} (I_l \cap \pi \calR_n )  = \left( I_l : \pi \calR_n  \right) .
\end{equation}

We can recursively ``find'' generators for the ideals $I_l$ using syzygies. 
Recall that, given a  finitely generated ideal $J$ of a ring $A$ with a set of generators  
$\mathcal{G}=\{f_{1},\dots,f_r\}$, 
we have the (first) module of syzygies 
\begin{equation*}
\Syg_{\calG, \,J}
 := \big\{(a_{1},\ldots,a_{r}) \in A^r : \, a_{1}f_{1}+  \cdots+a_{r}f_{r} = 0  \big\} .
\end{equation*}
Fix generators $f_1, \ldots, f_r$ for $I_l$. If a power series $g$ lies in  $I_{l+1}$,
then we can write $\pi g = a_1 f_1 + \ldots + a_rf_r$ for suitable $a_i \in \calR_n$, which, after reducing modulo~$\pi$, implies
that $(\ol {a_1}, \ldots, \ol{a_r})$ is a syzygy for $\ol{I_l} \subset k[x_1, \ldots, x_n]$, with generators $\ol \calG =\{ \ol{f_1}, \ldots , \ol{f_r} \}$. 
Conversely,
for any syzygy 
$s=(a_{1},\ldots,a_{r}) \in \Syg_{\ol\calG, \ol{I_l}}$
of the reduction $\ol{I_l}$,
and for any lift $(\tilde a_{1},\ldots, \tilde a_{r})$  of $s$ in $(\calR_n)^r$, we can define the power series 
\begin{equation}
\label{eq:generators_gs}
g_{s} \coloneqq \frac 1 \pi \sum_{i=1}^{r}\tilde a_{i} f_{i} ,
\end{equation}
which belongs to $I_{l+1}$. Notice that the definition of $g_s$ also depends on the choice of the lifts $\tilde a_i$, and that $g_s$ is ``new'', i.e.,  $g_s \notin I_l$, if and only if $s$ is not the reduction of a syzygy in $\Syg_{\calG,{I_l}}$. 
The following lemma implies that, after adding sufficiently many such elements $g_s$ to $I_l$, we obtain $I_{l+1}$; this is made precise in Corollary~\ref{cor:syzygies_construction}.

\begin{Lemma} \label{lem:saturation}
Suppose that the valuation on $K$ is discrete with uniformizer $\pi$. Let $I$ be an ideal of $\calR_n$ with generators $\mathcal{G}=\{f_{1}, \dots,f_{r}\}$. Let $\overline{I} \subset k[x_{1},\,\dots,\,x_{n}]$ be the reduction of $I$, with generators 
$\overline{\mathcal{G}}=\{\overline{f_{1}},\dots, \overline{f_{r}}\}$.
Consider the syzygy modules $\Syg_{\calG,I}$ and $\Syg_{\ol{\calG},\overline{I}}$, and also the reduction 
\begin{equation*}
\overline{\Syg_{\calG,\,I}}\coloneqq \big\{(\overline{a_{1}},\dots,\overline{a_{r}}) \,\colon\, (a_{1},\ldots,a_{r})\in\Syg_{\calG,\,I} \big\} .
\end{equation*}
Then we have: 
\begin{enumerate}[a)]
\item\label{item:is_saturated}
the reduction $\overline{\Syg_{\calG,I}}$ is a subset of $\Syg_{\ol\calG, \ol I}$  and equality holds if and only if $I=I^{\ec}$;
\item \label{item:finite_quotient_sat} 
the module $\Syg_{\ol{\calG},\overline{I}}$ and its quotient $\Syg_{\ol{\calG},\overline{I}} / \ol{\Syg_{\calG,I}}$ are finitely generated $k[x_1, \ldots, x_n]$-modules; 
\item \label{item:generation_saturation} given a set of generators $\{ s_1, \ldots, s_t \}$ of the quotient $\Syg_{\ol{\calG},\overline{I}} / \ol{\Syg_{\calG,I}}$, for any choice of functions $g_{s_1}, \ldots, g_{s_t}$ as in \eqref{eq:generators_gs} we have
\[
\frac{1}{\pi} (I \cap \pi \calR_n) =  (f_1, \ldots, f_r, g_{s_1}, \ldots, g_{s_t}) .
\]	
\end{enumerate}
\end{Lemma}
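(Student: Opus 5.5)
The plan is to prove the three items in order, relying on the correspondence described just before the lemma: an element of $I_1 = \frac1\pi(I\cap\pi\calR_n)$ corresponds to a syzygy of $\ol{\calG}$.

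For a), the inclusion $\ol{\Syg_{\calG,I}}\subseteq\Syg_{\ol\calG,\ol I}$ follows by reducing the relation $\sum a_if_i=0$ modulo $\pi$. For the equivalence, I would first show that $I=I^{\ec}$ if and only if $I_1=I$. One direction is trivial. For the other, the recursion \eqref{eq:one_step_saturation} shows that $I_1=I$ forces $I_l=I$ for all $l$, so $I^{\ec}=\bigcup_l I_l = I$ by \eqref{equation catena Il}.

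Next I would show that $\ol{\Syg_{\calG,I}}=\Syg_{\ol\calG,\ol I}$ if and only if $I_1=I$.
\begin{itemize}
\item Suppose equality of syzygy modules holds, and let $g\in I_1$, so that $\pi g=\sum a_if_i$. Then $(\ol{a_i})$ is a syzygy of $\ol\calG$, hence it equals $(\ol{b_i})$ for some $(b_i)\in\Syg_{\calG,I}$. Writing $a_i-b_i=\pi c_i$, we get $\pi g=\pi\sum c_if_i$. Since $\calR_n$ is torsion-free, $g=\sum c_if_i\in I$.
\item Conversely, suppose $I_1=I$ and let $s\in\Syg_{\ol\calG,\ol I}$ with lift $(\tilde a_i)$. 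Then $g_s\in I_1=I$, so $g_s=\sum c_if_i$. Hence $(\tilde a_i-\pi c_i)\in\Syg_{\calG,I}$, and this tuple reduces to $s$.
\end{itemize}

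For b), $k[x_1,\dots,x_n]$ is Noetherian and $\Syg_{\ol\calG,\ol I}$ is a submodule of $k[x]^r$, so it is finitely generated. Any quotient of a finitely generated module is finitely generated, which settles the quotient. One point to check is that $\ol{\Syg_{\calG,I}}$ is a $k[x]$-submodule. It is closed under addition, and under multiplication by $\ol h$ for any lift $h\in\calR_n$ (for instance the polynomial lift), so the quotient makes sense.

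For c), the inclusion $\supseteq$ holds because each $f_i\in I\subseteq I_1$ and each $g_{s_j}\in I_1$. For $\subseteq$, take $g\in I_1$ with $\pi g=\sum a_if_i$, and set $s=(\ol{a_i})\in\Syg_{\ol\calG,\ol I}$. Write $s=\sum_j\ol{h_j}s_j+\ol{b}$, where $\ol b\in\ol{\Syg_{\calG,I}}$ is the reduction of some $b\in\Syg_{\calG,I}$ and $h_j\in\calR_n$ are lifts. Let $\tilde a^{(j)}$ denote the chosen lifts used to define $g_{s_j}$. Then the tuple $a-\sum_j h_j\tilde a^{(j)}-b$ reduces to zero, so it equals $\pi c$ for some $c\in\calR_n^r$. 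Applying both sides to $(f_i)$ gives
\[
\pi g-\pi\sum_j h_jg_{s_j}-0=\pi\sum_i c_if_i .
\]
Dividing by $\pi$, which is allowed because $\calR_n$ is torsion-free, shows that $g\in(f_1,\dots,f_r,g_{s_1},\dots,g_{s_t})$.

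The main subtlety is this bookkeeping of lifts in c): one must check that the proof works for arbitrary choices of the lifts defining the $g_{s_j}$, and that differences of lifts are divisible by $\pi$ in $\calR_n$. The latter holds because an element of $\calR_n$ with zero reduction has all its coefficients in $\pi R$, and dividing them by $\pi$ preserves convergence.
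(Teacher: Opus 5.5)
Your proof is correct and follows essentially the paper's argument: part b) is the same Noetherianity argument, and part c) is the same computation (write the syzygy as a combination of the $s_j$ plus a reduced true syzygy, then divide the discrepancy by $\pi$). The only difference is in a), where you prove directly that equality of the syzygy modules is equivalent to $I_1 = I$; the paper instead takes one direction from c) with an empty set of generators, and the other from the remark that $g_s$ is new exactly when $s$ does not lift to a syzygy of $I$.
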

\begin{proof}
In part \ref{item:is_saturated} 
one implication follows from part~\ref{item:generation_saturation}: if the quotient $\Syg_{\ol{\calG},\overline{I}} / \ol{\Syg_{\calG,I}}$ is trivial, then $I = I_1$, and by induction, using \eqref{eq:one_step_saturation}, the whole chain of ideals $I_0 \subseteq I_1 \subseteq \ldots$ in \eqref{equation ideals Icerchietto n} is constant and equal to $I$. 
Conversely, if $\Syg_{\ol{\calG},\overline{I}} / \ol{\Syg_{\calG,I}}$ is not trivial, then we can pick a non-zero syzygy $s$ in the quotient, so that the series $g_s$ defined in \eqref{eq:generators_gs} belong to $I^{\ec}$ but not to $I$. 

Part~\ref{item:finite_quotient_sat} is immediate since  $\Syg_{\ol \calG, \ol I} \subseteq (k[x_1, \ldots, x_n])^r $ is finitely generated by Noetherianity of $k[x_1, \ldots, x_n]$, and a fortiori any quotient of $\Syg_{\ol \calG, \ol I}$ is finitely generated.

One inclusion in part~\ref{item:generation_saturation} is clear, since all $f_i$ and $g_{s_i}$ belong to $\tfrac{1}{\pi}(I \cap \pi \mathcal{R}_n)$. 
For the other inclusion take an element $f$ of  $\tfrac 1\pi (I \cap \pi \calR_n)$, which can be written as
$$
f= \frac 1 \pi  (\lambda_{1}f_{1} + \ldots + \lambda_r f_r) ,
$$
for suitable power series $\lambda_i$. 
Since $(\overline{\lambda_{1}},\, \dots,\, \overline{\lambda_{r}})$ is a syzygy for $\ol I$, it 
is equal, in the quotient 
$\Syg_{\ol{\calG},\overline{I}}/\ol{\Syg_{\calG,I}}$,  to a combination 
$\ol{\mu_1} s_1 + \ldots + \ol{\mu_t} s_t$    
for suitable  $\ol{\mu_i} \in k[x_1, \ldots, x_n]$. 
Choosing lifts $\mu_i \in \calR_n$ of $\ol{\mu_i}$ and using the definition \eqref{eq:generators_gs} of $g_{s_i}$ (which also depends on the choice of lifts)
we can write    
$$
f - \mu_1 g_{s_1} - \ldots - \mu_t g_{s_t} = \frac{1}{\pi} (\nu_{1}f_{1} + \ldots + \nu_r f_r)
$$
for certain power series $\nu_i$ satisfying the following congruence
\[
(\ol{\nu_1}, \ldots, \ol{\nu_r}) = (\overline{\lambda_{1}},\, \dots,\, \overline{\lambda_{r}}) - \ol{\mu_1} s_1 - \ldots  -\ol{\mu_t} s_t    .
\]
By the definition of the $\ol{\mu_j}$'s, the above syzygy is trivial in the quotient 
$\Syg_{\ol{\calG},\overline{I}} / \ol{\Syg_{\calG,I}}$, hence it can be lifted to  a syzygy $(\nu_1', \ldots, \nu_r')$ of $I$. In other words, we have $\sum \nu'_i f_i = 0$ and $\nu_i \equiv \nu_i' \pmod \pi$. Writing $\nu_i = \nu_i' + \pi \omega_i$ for suitable $\omega_i\in \calR_n$, we deduce that 
\[
f - \sum_{i=1}^t\mu_i g_{s_i}  =  \frac{1}{\pi} \bigl( (\nu_{1}' + \pi \omega_1)f_{1} + \ldots + (\nu_{r}' + \pi \omega_r) f_r \bigr) =   \omega_1 f_{1} + \ldots + \omega_r f_r ,
\]
hence $f$ belongs to $(f_1, \ldots, f_r, g_{s_1}, \ldots, g_{s_t})$, which proves the desired inclusion.
\end{proof}

\begin{Corollary}\label{cor:syzygies_construction}
Suppose that the valuation on $K$ is discrete. Let $I$ be an ideal of $\calR_n$, and consider the ideals $I = I_0 \subseteq I_1  \subseteq \ldots$ defined in \eqref{equation ideals Icerchietto n}. 
Then, for each  $l \in \Z_{\ge 0}$, for each choice of generators $\calG$ for $I_l$ and generators $\{s_{1},\dots,s_{t}\}$ for 
$\Syg_{\ol{\calG},\overline{I_{l}}} / \ol{\Syg_{\calG,I_{l}}}$, we have 
\begin{equation*}
I_{l+1} = \langle I_l,\,g_{s_{1}} , \dots,\,g_{s_{t}}\rangle ,
\end{equation*}
where the $g_{s_i}$'s are defined as in Equation~\eqref{eq:generators_gs}. In particular, if $\Syg_{\ol{\calG},\overline{I_{l}}} = \ol{\Syg_{\calG,I_{l}}}$, then $I_l = I^{\ec}$.
\end{Corollary}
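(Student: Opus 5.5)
The plan is to apply Lemma~\ref{lem:saturation} directly to the ideal $I_l$ in place of $I$, and then read off the last claim from part~\ref{item:is_saturated} of the same lemma together with the chain \eqref{equation catena Il}. Fix $l\in\Z_{\ge 0}$ and a finite set of generators $\calG=\{f_1,\ldots,f_r\}$ of $I_l$. Such a set exists because $\calR_n$ is Noetherian when the valuation is discrete, as recalled at the start of this section. Then $\ol{\calG}$ generates $\ol{I_l}$, so we are exactly in the setting of Lemma~\ref{lem:saturation} with $I$ replaced by $I_l$. Part~\ref{item:finite_quotient_sat} guarantees that a finite generating set $\{s_1,\ldots,s_t\}$ of $\Syg_{\ol{\calG},\ol{I_l}}/\ol{\Syg_{\calG,I_l}}$ exists.

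Next I will invoke part~\ref{item:generation_saturation}. It gives
\[
\frac{1}{\pi}(I_l\cap \pi\calR_n)=(f_1,\ldots,f_r,g_{s_1},\ldots,g_{s_t}),
\]
where the $g_{s_i}$ are defined as in \eqref{eq:generators_gs} using the generators $f_i$ of $I_l$. By the recursion \eqref{eq:one_step_saturation}, the left-hand side is $I_{l+1}$. The right-hand side equals $\langle I_l, g_{s_1},\ldots,g_{s_t}\rangle$, since the $f_i$ generate $I_l$. The only thing to check is the recursion \eqref{eq:one_step_saturation} itself, which is immediate from the definitions: $g\in (I_l:\pi)$ means $\pi g\in I_l$, that is, $\pi^{l+1}g\in I$, that is, $g\in I_{l+1}$.

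For the final assertion, suppose $\Syg_{\ol{\calG},\ol{I_l}}=\ol{\Syg_{\calG,I_l}}$. There are two equivalent ways to conclude. The first is that part~\ref{item:is_saturated} of Lemma~\ref{lem:saturation}, applied to $I_l$, gives $I_l=(I_l)^{\ec}$ directly. The second is to argue through the chain. The quotient is then zero, so we may take $t=0$, and the displayed equality yields $I_{l+1}=I_l$. Since each $I_{j+1}=(I_j:\pi)$ depends only on $I_j$, induction gives $I_j=I_l$ for all $j\ge l$. By \eqref{equation catena Il} we get $I^{\ec}=\bigcup_j I_j=I_l$.

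I do not expect a real obstacle here, since everything is a direct consequence of the lemma. The one point needing care is that the saturation of $I_l$ coincides with $I^{\ec}$: $I\subseteq I_l\subseteq I^{\ec}$, and saturation is monotone and idempotent. This is what lets us pass from ``$I_l$ is saturated'' to $I_l=I^{\ec}$.
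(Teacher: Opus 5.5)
Your proposal is correct and is essentially the argument the paper intends (the paper states the corollary without a separate proof): apply part~\ref{item:generation_saturation} of Lemma~\ref{lem:saturation} to $I_l$ and use the recursion \eqref{eq:one_step_saturation}, then obtain the last claim either from part~\ref{item:is_saturated} or from the stabilization of the chain, as in the lemma's own proof of part~\ref{item:is_saturated}. Your remark that $(I_l)^{\ec}=I^{\ec}$ follows from $I\subseteq I_l\subseteq I^{\ec}$, with saturation monotone and idempotent, is the one small step the paper leaves implicit, and you handle it correctly.
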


Corollary~\ref{cor:syzygies_construction} gives a constructive way to compute the ideals $I_l$. Notice that one can simply choose the elements $s_i$ 
to be generators of $\Syg_{\ol{\cal G}, \ol{I_l}}$, which only requires computing syzygies over a polynomial ring.
Moreover, the functions $g_{s_{1}}, \dots, g_{s_{t}}$, defined in \eqref{eq:generators_gs}, are combinations of the $f_{i}$ divided by $\pi$, hence if we know the $f_{i}$'s with $\pi$-adic precision $N$, then we can compute the $g_{s_j}$'s with $\pi$-adic precision $N-1$. This gives the following algorithm. An implementation is attached to the Arxiv version of this article.

\bigskip
\begin{algorithm}[H]
\caption{Computing generators for $I_{1}, \ldots, I_{N-1}$ with decreasing $\pi$-adic precision}
\label{alg: generators Il}

\SetKwInOut{Input}{Input}\SetKwInOut{Output}{Output}
\SetKwFor{ForEach}{for each}{do}{end}

	\Input{A set $\mathcal{G}_0 = \bigl\{q_1 + O(\pi^N), \ldots, q_r+ O(\pi^N) \bigr\}$ of generators of $I \subset \calR_n$.} 
\Output{A set of generators $\calG_l$ of $I_{l}$ with precision $O(\pi^{N-l})$ 
for each $l = 1, \ldots, N-1$.}

\BlankLine

\ForEach{$l \in \{1, \dots, N-1\}$}{
Denote $\calG_{l-1} = \bigl\{p_1+ O(\pi^{N-l+1}), \ldots, p_u+ O(\pi^{N-l+1}) \bigr\}$\;
Compute generators $s_{1}, \dots, s_{t} \in \bigl(k[x_1, \ldots, x_n]\bigr)^u$ of $\Syg_{\{ \ol{p_1}, \ldots, \ol{p_u}\},\overline{I_{l-1}}}$  (see \cite{MR3424044})\;
\ForEach{$j \in \{1, \dots, t\}$}{
Choose a lift $\tilde{s_{j}} = (\tilde a_1, \ldots, \tilde a_u) \in \Rcal_{n}^{u}$ of $s_j $ and set $g_{{s}_{j}} := \tfrac 1 \pi \sum_{i=1}^u \tilde a_i p_i$ as in \eqref{eq:generators_gs}\;
}
Set $\calG_l=\calG_{l-1}   \cup  \bigl\{g_{{s_{1}}} + O(\pi^{N-l}), \dots, g_{{s_{t}}} + O(\pi^{N-l})\bigr\}$
}

\Return{$\calG_1, \ldots , \calG_{N-1}$\;}
\end{algorithm}

\smallskip

\begin{Remark}\label{rmk:less_generators} 
    Some of the generators computed by the above algorithm might be redundant, and we might be able to prove it even working with finite precision. 
    For example, if the first element $g $ of $\calG_{l-1}$ is a multiple of $\pi$, then $s_1 = (1,0,\ldots, 0)$ is a syzygy in $\Syg_{\ol{\calG_{l-1}},\overline{I_{l-1}}}$, and if we take  $\tilde s_1 = (1,0,\ldots, 0)$ then $\calG_l$ contains both an approximation of $g$ and an approximation $g_{s_1}$ of $\frac{1}{\pi} g$, but the first can be removed from $\calG_l$. 
    We can generalize this observation as follows.

    To avoid confusion, given an approximation $g$ of a power series in $I_l$, we denote by $\widehat g$ its exact counterpart. 
    For example, the elements $q_1 + O(
    \pi^{N}), \ldots, q_r+O(\pi^N)$ in $\calG = \calG_0$ are approximations of certain $\widehat{q_i}$. Then, if we choose the lifts $\tilde a_i$ in the algorithm to be polynomials in $A:=R[x_1, \ldots, x_n]$, we can  recursively write the elements $\widehat{p_j}$ of $\calG_l$ as an (exact) combination of the $\widehat{q_i}$'s. In other words, we can compute $b_{ij} \in A$ such that 
    \[
    \widehat{p_j} = \frac{1}{\pi^l} \sum_{i=1}^r b_{ij} \cdot \widehat{q_i} .
    \]
    We then look at the $A$-module $M  \subset A^r$ generated by the vectors $B_j = (b_{1j}, \ldots, b_{rj})$. If we can generate $M$ with fewer generators $B'_j = (b'_{1j}, \ldots, b'_{rj})$,  
    then we are also able to generate $I_l$ with fewer than $\# \calG_l$ power series: a smaller set of generators is given by the series
    \[
\widehat{p_j'} := \frac{1}{\pi^l} \sum_{i=1}^r b'_{ij} \widehat{q_i} 
\quad \text{ approximated as }  \quad 
p_j' =\frac{1}{\pi^l} \sum_{i=1}^r b'_{ij} q_i + O(\pi^{N-l}) . 
	\]
	For example, in the case $R = \Z_p$, we can approximate convergent power series with polynomials in $\Z[x_1, \ldots, x_n]$ and in practice we can check if some of the $B_j$'s are redundant using the \emph{mingens} function in Macaulay2 \cite{M2}.  
\end{Remark}

\subsection{Some dimensionality results}
The above Algorithm~\ref{alg: generators Il} would be sufficient to solve  Problem~\ref{prob:compute_saturation} if we knew an index $l\le N-M$ such that $I_{l} = I_{l+1} = I^{\ec}$. 
We are not aware of any general algorithm to determine whether this happens using approximate information. 
For example, since we do not have access to the generators of $I_l$ with infinite precision, it is in general not possible to compute $\Syg_{\calG_l,I_{l}}$ and to  check the termination condition $\Syg_{\ol{\calG_l},\overline{I_{l}}} = \ol{\Syg_{\calG_l,I_{l}}}$ in Lemma \ref{lem:saturation}.
Sometimes, though, it happens that $\Syg_{\ol \calG_l, \ol I_{l}}$ is as small as possible, and this gives a sufficient criterion  for having $I_l = I^{\ec}$, as in the following proposition.

\begin{Proposition}\label{prop:approxiation_sometimes_ok}
    Suppose that the valuation on $K$ is discrete and let $I$ be an ideal of $\calR_n$ generated by $r$ power series.
    
    If for some integer $l \ge 0$ we have  $\dim\bigl(k[x_1, \ldots, x_n]/\ol{I_l}\bigr) < n-r$, then $I^{\ec} = \calR_n$.
    If for some integer $l \ge 0$ we have $\dim\bigl(k[x_1, \ldots, x_n]/\ol{I_l}\bigr) = n-r$ and $I_l$ can be generated by $r$ power series, then $I_l = I^{\ec}$. 
    In particular, if $\dim\bigl(k[x_1, \ldots, x_n]/ \ol{I}\bigr) = n-r$, then $I = I^{\ec}$. 
\end{Proposition}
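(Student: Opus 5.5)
The plan is to compare the dimension of the reduction of $I_l$ with the dimension of the generic fibre $\Tcal_n/I^{\mathrm e}$. For the generic fibre, Krull's height theorem in the Noetherian ring $\Tcal_n$ gives a lower bound $n-r$. The link between the two is the first part of Theorem~\ref{Theorem main}.

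\emph{First claim.} Suppose $I^{\ec}\neq\calR_n$.

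First, $\ol{I^{\ec}}$ is a proper ideal. If it contained $1$, some $1+\pi h$ would lie in $I^{\ec}$. This element is a unit of $\calR_n$, since the geometric series $\sum_j(-\pi h)^j$ converges, so $I^{\ec}=\calR_n$, a contradiction. In the same way $I\cap R=(0)$, since otherwise $I^{\ec}$ contains a non-zero constant and hence $1$.

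Next, $I^{\mathrm e}$ is a proper ideal of $\Tcal_n$ generated by $r$ elements. By Krull's height theorem every minimal prime over $I^{\mathrm e}$ has height $\le r$. Every maximal ideal of $\Tcal_n$ has height $n$, and $\Tcal_n$ is a catenary domain (see \cite{MR3309387}). Hence $\dim\Tcal_n/I^{\mathrm e}\ge n-r$.

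Equation~\eqref{equation dimension of quotient of Tn, Kn and Ln of main Theorem} then gives $\dim k[x_1,\ldots,x_n]/\ol{I^{\ec}}\ge n-r$. Since $I_l\subseteq I^{\ec}$, we have $\ol{I_l}\subseteq\ol{I^{\ec}}$, so $\dim k[x_1,\ldots,x_n]/\ol{I_l}\ge n-r$. This contradicts the hypothesis, so $I^{\ec}=\calR_n$.

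\emph{Second claim.} Write $I_l=(g_1,\ldots,g_r)$. The reduction $\ol{I_l}$ is generated by $\ol{g_1},\ldots,\ol{g_r}$, and the hypothesis says it has height $r$ in $k[x_1,\ldots,x_n]$.

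The aim is to apply Lemma~\ref{lem:saturation}\ref{item:is_saturated} with $\calG=\{g_1,\ldots,g_r\}$. This requires showing that every syzygy of $\ol{\calG}$ lifts to a syzygy of $\calG$.

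Consider the Koszul complex $K_\bullet(\ol{g_1},\ldots,\ol{g_r})$ over the polynomial ring, and consider its first homology $H_1$. This $H_1$ is exactly the quotient of $\Syg_{\ol\calG,\ol{I_l}}$ by the Koszul syzygies $\ol{g_j}e_i-\ol{g_i}e_j$. It suffices to check $H_1=0$ locally at every maximal ideal $\mathfrak n$, and there are two cases.
\begin{itemize}
\item If $\mathfrak n\not\supseteq\ol{I_l}$, some $\ol{g_i}$ is a unit in the localization, and then the Koszul complex is exact.
\item If $\mathfrak n\supseteq\ol{I_l}$, then $k[x]_{\mathfrak n}$ is regular, hence Cohen--Macaulay. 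The $r$ elements $\ol{g_i}$ generate an ideal of height $r$ there. By the unmixedness theorem they form a regular sequence, so $H_1$ vanishes.
\end{itemize}
Therefore $\Syg_{\ol\calG,\ol{I_l}}$ is generated by Koszul syzygies. Each of these is the reduction of the Koszul syzygy $g_je_i-g_ie_j\in\Syg_{\calG,I_l}$. Lemma~\ref{lem:saturation}\ref{item:is_saturated} then gives $I_l=(I_l)^{\ec}=I^{\ec}$.

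One point needs care: the height of $\ol{I_l}$ must be computed locally. The hypothesis $\dim=n-r$ only says that some component has codimension $r$. However, by Krull's height theorem every minimal prime of an ideal generated by $r$ elements has height $\le r$. Since $\dim k[x]/\ol{I_l}=n-r$ and $k[x]$ is catenary and equidimensional, every minimal prime of $\ol{I_l}$ has height $\ge r$. So every minimal prime has height exactly $r$, and the local statement holds at every $\mathfrak n\supseteq\ol{I_l}$.

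\emph{Last sentence.} This is the case $l=0$ of the second claim, with $I_0=I$ generated by $r$ series.

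The step I expect to be the main obstacle is the lower bound $\dim\Tcal_n/I^{\mathrm e}\ge n-r$. It relies on structural facts about $\Tcal_n$: Noetherianity, all maximal ideals of height $n$, and catenarity. I would cite these from \cite{MR3309387} rather than reprove them. Alternatively, one could argue inside $\calR_n$, using Krull's theorem together with Proposition~\ref{proposition Krull dimension of Kn} and the formula $\dim\calR_n/I^{\ec}-1$.
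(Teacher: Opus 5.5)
Your proof is correct. The first claim follows the paper's own argument: Krull's height theorem in the regular, hence catenary, ring $\Tcal_n$, combined with the dimension equality of Theorem~\ref{Theorem main}. You also verify $I\cap R=(0)$, which that theorem requires and which the paper uses without comment.

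For the second claim you take a different and shorter route. The paper first reduces to $l=0$. It then passes to a finite extension $L$ of $K$ whose residue field contains enough scalars to replace the generators by triangular linear combinations $f_i$. These are chosen so that their reductions form a regular sequence in the non-local ring $k_L[x_1,\ldots,x_n]$. This general-position step is needed because $r$ generators of a height-$r$ ideal need not form a regular sequence in the given order, e.g.\ $xz,yz,z-1$ in $k[x,y,z]$. The paper then applies Koszul exactness and Lemma~\ref{lem:saturation} over $\calO_L\langle x_1,\ldots,x_n\rangle$. Finally it descends saturatedness to $\calR_n$, using that $\calO_L$ is free over $R$.

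You instead keep the given generators of $I_l$. You kill $H_1$ of their Koszul complex by localizing at maximal ideals of $k[x_1,\ldots,x_n]$, where the Cohen--Macaulay property makes the generators a regular sequence in any order. Equivalently, this is depth sensitivity of the Koszul complex, since $\mathrm{grade}\,\ol{I_l}=\mathrm{ht}\,\ol{I_l}=r$. Your route needs only that Koszul homology commutes with localization, and it avoids the field extension, the choice of linear combinations, and the descent step. The paper's route gives in addition an explicit generating set whose reduction is a regular sequence, which the conclusion does not need.

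Your paragraph on local heights is correct but can be shortened. The condition $\mathrm{ht}\,\ol{I_l}=r$ already means every prime containing $\ol{I_l}$ has height at least $r$, and that is all the local argument uses.
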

\begin{proof}
We first analyze the case $\dim\bigl(k[x_1, \ldots, x_n]/\ol{I_l}\bigr) < n-r$. 
    The ideal $I^{\mathrm{e}}$ is generated by the same $r$ generators of $I$, hence the quotient $\calT_n/I^{\mathrm{e}}$, when it is not the trivial ring, has dimension at least $n-r$ by  Krull's  Hauptidealsatz, together with the fact that $\calT_n$ is regular (see \cite[Proposition~17, Chapter~2]{MR3309387}), hence catenary. If, by way of contradiction, $I^{\ec}$ is not the full $\calR_n$, then the quotient $\calT_n/I^{\mathrm{e}}$ is not the trivial ring  and, using Theorem \ref{Theorem main}, we have
 \[
 \dim\left( k[x_1, \ldots, x_n]/\ol{I^{\ec}} \right) = \dim\left(\calT_n/I^{\mathrm{e}} \right) \ge n-r .
 \]
 Anyway the ring $k[x_1, \ldots, x_n]/\ol{I^{\ec}}$ is a quotient of $k[x_1, \ldots, x_n]/\ol{I_l}$, hence it has dimension at most  $\dim\bigl(k[x_1, \ldots, x_n]/\ol{I_l}\bigr)$ which is smaller than $n-r$, a contradiction.

We now turn to the last part of the statement. 
It is enough to treat the case $l=0$, i.e.,   $I_l = I$. 
Moreover, it is enough to prove that, for some extension $L \supset K$, with ring of integers $\calO_L$, the ideal  $I \cdot \calO_L$ is saturated in $\calO_L\langle x_1, \ldots, x_n \rangle$.  
 Indeed, using that $\calO_L$ is a free $R$-module of finite rank, we can deduce that $(I \cdot \calO_L) \cap \calR_{n} = I$, hence that $I$ is also saturated.
We now look for an extension $L$ such that $I \cdot \calO_L$ is generated by power series $f_1, \ldots, f_r$ whose reductions modulo~$\mfrak$ form a regular sequence and, using this hypothesis, we prove that $I \cdot \calO_L$ is saturated.

Let $\{ g_{1}, \ldots, g_{r} \}$ be a generating set for $I$, and let $k^{s}$ be the separable closure of $k$. First, we prove that the extension $\ol{I}_{k^{s}}$ of $\ol{I}$ to $k^{s}[ x_{1}, \ldots, x_{n} ]$ can be generated by a regular sequence  $( \ol{f_{1}}, \ldots, \ol{f_{r}} )$ 
which, up to reordering the $g_j$'s, is of the form
\begin{equation}\label{eq:reduced_regular_seq}
\ol{f_{i}} = \lambda_{i, \, i} \cdot \ol{g_{i}} + \lambda_{i, \, i+1} \cdot \ol{g_{i+1}} + \ldots + \lambda_{i, \, r} \cdot \ol{g_{r}} , 
\end{equation}
for suitable constants $\lambda_{i, \, j} \in k^{s}$ such that $\lambda_{i,i} \neq 0$. 

Since $k^{s}[x_{1}, \dots, x_{n}]$ is a domain, we set $\overline{f_{1}} = \overline{g_{1}}$ which is different from zero, otherwise $\dim\bigl(k[x_1, \ldots, x_n]/\overline{I}\bigr)$ would exceed $n-r$. Proceeding by induction, suppose we have constructed a regular sequence $(\overline{f_{1}}, \dots, \overline{f_{s}})$ as in \eqref{eq:reduced_regular_seq} for some $s < r$. Let $C_{1}, \dots, C_{a}$ be the irreducible components of the subvariety defined by these polynomials. For any polynomial $\ol f$,  the sequence $(\overline{f_{1}}, \dots, \overline{f_{s}}, \overline{f})$ is regular if and only if $\overline{f}$ does not vanish identically on any $C_i$. Since $\dim\bigl(k^s[x_1, \ldots, x_n]/\overline{I_{k^s}}\bigr) = n-r < n-s$, the polynomials $\overline{g_{s+1}}, \dots, \overline{g_{r}}$ cannot simultaneously vanish on any $C_i$. As $k^s$ is infinite, there exists a linear combination of them with coefficients in $k^{s}$ which does not vanish on all $C_{i}$'s. Up to reordering, we may assume the first coefficient of this combination to be non-zero and define $\overline{f_{s+1}}$ as this combination.

Once we have constructed the $\ol{f_i}$ as in \eqref{eq:reduced_regular_seq}, we pick $L$ to be a finite extension of $K$ whose residue field $k_L$ contains all the $\lambda_{i,j}$'s. Then, we can lift the polynomials $\ol{f_1}, \ldots, \ol{f_r}$ to generators $f_1, \ldots, f_{r}$ of $I \cdot \calO_L$ simply by 
choosing lifts $\tilde \lambda_{i,j} \in \calO_L$ of $\lambda_{i,j}$ and defining 
\begin{equation*}
f_{i} \coloneqq \tilde{\lambda}_{i, \, i} \cdot g_{i} + \tilde{\lambda}_{i, \, i+1} \cdot g_{i+1} + \ldots + \tilde{\lambda}_{i, \, r} \cdot g_{r}. \end{equation*}
Notice that the $f_i$'s generate $I\cdot \calO_L$ since the $g_i$'s are generators and the matrix $(\tilde \lambda_{i,j})$ has non-zero determinant modulo
the maximal ideal of $\calO_L$.

Finally we prove that $I \cdot \calO_L$ is saturated, using syzygies. Since $\{ \ol{f_{1}}, \ldots, \ol{f_{r}} \}$ is a regular sequence in $k_{L}[x_{1}, \ldots, x_{n}]$, \cite[Corollary~$17.5$]{eisenbud2013commutative} implies that the associated Koszul complex is exact, hence the $(r - 1)$-th cohomology group of the Koszul complex vanishes (the Koszul complex of Eisenbud is slightly different from the usual one). It follows that the syzygy module of $\{ \ol{f_{1}}, \ldots, \ol{f_{r}} \}$ is generated by the elements
\begin{equation*}
( 0, \ldots, 0, -\ol{f_{j}}, 0, \ldots, 0, \ol{f_{i}}, 0, \ldots, 0 ) , \qquad\qquad 1 \le i < j \le r ,
\end{equation*}
where $-\ol{f_{j}}$ is in position $i$ and $\ol{f_{i}}$ is in position $j$. Since these elements are the reductions of the elements
\begin{equation*}
( 0, \ldots, 0, -f_{j}, 0, \ldots, 0, f_{i}, 0, \ldots, 0 ) , \qquad\qquad 1 \le i < j \le r ,
\end{equation*}
in the syzygy module of $\{ f_{1}, \ldots, f_{r} \}$, Lemma~\ref{lem:saturation} part~\ref{item:is_saturated} implies that $I \cdot \calO_L$ is saturated, as desired.
\end{proof}

The above statement gives a sufficient criterion to answer Problem~\ref{prob:compute_saturation} using Algorithm~\ref{alg: generators Il}, together with Remark~\ref{rmk:less_generators}. For example, if $I$ is generated by $r$ elements and $k[x_1, \ldots, x_n]/\ol I$ has dimension $n-r$, then $I^{\ec} = I$, which we know with precision $O(\pi^N)$. A slightly less trivial example is $I =(f_1, f_2)$ for some $f_1, f_2$ known with precision $N \ge 2$, and $f_2 = gf_1 + \pi f_3 + O(\pi^N)$ for some  $f_3$ such that $ k[x_1, \ldots, x_n]/(\ol{f_1}, \ol{f_3})$ has dimension $n-2$. The algorithm gives $I_1 =(f_1, f_2, f_3' := \frac1\pi(f_2-gf_1))$ 
and,  by Remark \ref{rmk:less_generators}, $I_1$ is also generated by $f_1, f_3'$; in particular $\ol{I_1} = (\ol{f_1}, \ol{f_3'}) = (\ol{f_1}, \ol{f_3})$, hence, by Proposition~\ref{prop:approxiation_sometimes_ok}, we have $I^{\ec} = I_1$, which we know with precision $O(\pi^{N-1})$.

We now look at an example where Proposition \ref{prop:approxiation_sometimes_ok} cannot be applied, even though, starting from approximate information, we are able to compute the reduction of the saturation. 

\begin{Example}
\label{example saturation and dimension}
Consider an ideal $I =  (f_1, f_2) \subset \Z_{p}\langle x, y \rangle$ with $f_1 = x^{2} + py + O (p^2)$ and $f_2 =   xy +O( p^2)$. In other words there exist power series $g_1, g_2 \in \Z_{p}\langle x,  y\rangle$ such that $f_1 = x^{2} + py + p^2 g_1$ and $f_2 =   xy + p^2 g_2$. Then the 
syzygy module associated to $\ol{I} = ( x^{2}, xy )$ is generated by $s = ( y, -x )$, and consequently  
\begin{equation*}
I_{1} 
= \left( x^{2} + py + p^2 g_1, xy + p^2 g_2, y^{2} + pyg_1 -pxg_2 \right),
\quad\text{hence}\quad 
 \ol{I_{1}} = ( x^{2},  xy,  y^{2} ) .
\end{equation*}
The syzygy module associated to $\ol{I_{1}}$ is generated by the syzygies 
\begin{equation*}
( y,  -x,  0) , \quad ( y^{2}, 0,  -x^{2})  , \quad ( 0,  y,  -x) ,
\end{equation*}
which all lift to syzygies of $I_1$, namely
\[
(y,-x,-p),\quad
(y^2-pxg_2,pxg_1,-x^2-py),\quad
(-pg_2,y+pg_1,-x)
\]



Then, by Corollary \ref{cor:syzygies_construction}, we have $I_{1} = I^{\ec}$. Notice that $\ol{I_{1}}$ cannot be generated by $2$ elements, but the condition $\dim \bigl( \F_{p}[x, y] / \ol{I_{1}} \bigr) = 0 = n-r$ holds.
\end{Example}

Other examples of this type suggest that Proposition~\ref{prop:approxiation_sometimes_ok} might hold in greater generality. This leads us to formulate the following question:

\begin{Question} \label{conjecture}
Suppose that the valuation on $K$ is discrete. 
Is it true that for all ideals $I \subset \calR_n$ generated by $r$ power series, if for some  $l \in \Z_{\ge 0}$ we have $\dim\bigl(k[x_1, \ldots, x_n]/ \ol{I_{l}}\bigr) = n - r$, then $I_{l} = I^{\ec}$?
\end{Question}

\subsection{Saturation and ideals generated by polynomials...}
We look at another approach, inspired by Example~\ref{example saturation and dimension}. There, we were able to prove that $I_1 = I^{\ec}$ starting from approximate power series simply by giving the name ``$g_i$'' to the information hidden by the approximation. 
This suggests 
another possible approach to Problem~\ref{prob:compute_saturation}, namely to treat the information we don't know as a new variable: if we know the power series $f_i$ with precision $\pi^N$, say $f_i = p_i + O(\pi^N)$ for certain polynomials $p_i(x_1, \ldots, x_n)$, we can look at  $f_i'(x_1, \ldots, x_n, y_i) = p_i(x_1, \ldots, x_n) + \pi^N y_i$ and try to compute the saturation of the ideal generated by the (exact) power series $f_i'$. By the following lemma, if the approximated  result does not depend on the variables $y_{j}$'s, then the same result also approximates the saturation of $I$.

\begin{Lemma}
    \label{lem: approximation is useful}
    Suppose that the valuation on $K$ is discrete, with uniformizer $\pi$, and let $N$ be a positive integer. Let $I = (f_1, \, \ldots, \, f_r)$ be an ideal of $\calR_n$ and let $p_i$ be polynomials in $R[ x_{1}, \, \ldots, \, x_{n}]$ such that 
    $f_i \equiv p_i \pmod {\pi^N}$. Consider the ideal $J = ( p_{1} + \pi^{N}y_{1}, \ldots,  p_{r} + \pi^{N}y_{r} )$ of $R\langle x_1, \, \ldots, \, x_n, \, y_1, \, \ldots, \, y_r\rangle$. 
    
    For every positive integer $M$, 
    if $J^{\ec}$ can be generated by power series $h_1, \ldots, h_s$ of the form $h_i = q_i(x_1, \ldots, x_n) + O(\pi^M)$, that is, the polynomials $(h_i \bmod \pi^M)$ do not contain the variables~$y_j$, then $I^{\ec}$ can be generated by power series $g_1, \ldots, g_s \in \calR_n$ of the form $g_i = q_i(x_1, \ldots, x_n) + O(\pi^M)$. 
\end{Lemma}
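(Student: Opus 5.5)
The plan is to specialize the extra variables. Write $y = (y_1, \ldots, y_r)$ and consider the $R$-algebra morphism
\[
\phi \colon R\langle x_1, \ldots, x_n, y_1, \ldots, y_r\rangle \lto \calR_n, \qquad x_i \mapsto x_i, \quad y_j \mapsto c_j := \frac{f_j - p_j}{\pi^N}.
\]
This substitution is well defined because $c_j \in \calR_n$, since $f_j \equiv p_j \pmod{\pi^N}$. It is continuous, because substituting elements of $\calR_n$ into a convergent power series gives a convergent power series. By construction $\phi(p_j + \pi^N y_j) = f_j$, so $\phi(J) \cdot \calR_n = I$.

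\textbf{Step 1: $\phi(J^{\ec}) \subseteq I^{\ec}$.} If $h \in J^{\ec}$, then $\pi^l h \in J$ for some $l$. Applying $\phi$ gives $\pi^l \phi(h) \in I$, hence $\phi(h) \in I^{\ec}$.

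\textbf{Step 2: the generators of $I^{\ec}$.} Set $g_i := \phi(h_i)$. Since $h_i = q_i + \pi^M e_i$ with $e_i$ integral, we get $g_i = q_i + \pi^M \phi(e_i) = q_i + O(\pi^M)$, which is the required shape. By Step 1, $(g_1, \ldots, g_s) \subseteq I^{\ec}$.

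\textbf{Step 3: the reverse inclusion.} This is the main obstacle, because $\phi$ need not map $J^{\ec}$ onto $I^{\ec}$. To handle it, I would find a subring of the source that maps isomorphically onto $\calR_n$. Consider the elements $u_j := y_j - c_j(x)$ of $R\langle x, y\rangle$. The change of variables $y_j \mapsto u_j + c_j(x)$ is an automorphism of $R\langle x, y\rangle$: its inverse is $u_j \mapsto y_j - c_j(x)$, and both maps are continuous substitutions of integral series. In the new variables,
\[
p_j + \pi^N y_j = p_j + \pi^N c_j + \pi^N u_j = f_j(x) + \pi^N u_j .
\]
Hence $R\langle x, y\rangle / J \cong \calR_n\langle u\rangle / (f_j + \pi^N u_j)_j$. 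Under this isomorphism, $\phi$ becomes the specialization $u \mapsto 0$.

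The remaining claim is that every $g \in I^{\ec}$ lifts to $J^{\ec}$. Take $g \in I^{\ec}$ with $\pi^l g = \sum_j a_j f_j$, where $a_j \in \calR_n$. Then
\[
\pi^l g - \sum_j a_j (f_j + \pi^N u_j) = -\pi^N \sum_j a_j u_j .
\]
This shows that $g$ is congruent modulo $J^{\ec} + (u)$ to something, but it does not directly place $g$ in $J^{\ec}$.

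A cleaner way is to reduce to showing that the kernel of $\phi$ on the quotients is controlled. Every element of $R\langle x, u\rangle$ can be written as $F(x) + \sum_j u_j G_j$, and $\phi$ sends it to $F$. I would then prove $I^{\ec} = \phi(J^{\ec})$ directly. Given $g \in I^{\ec}$, I want some $H \in J^{\ec}$ with $\phi(H) = g$; the natural candidate is $H = g$ itself, viewed in $R\langle x, u\rangle$. Since $\pi^l g = \sum_j a_j f_j$, we get $\pi^l g \equiv -\pi^N \sum_j a_j u_j \pmod J$. So it suffices to see that $u_j \in J^{\ec}$ for all $j$ whenever $f_j \in I^{\ec}$, which is trivial, but this shows only that $u_j f_j$-type terms lie in the ideal. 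If this direct route fails, I would instead accept the weaker conclusion $\phi(J^{\ec})\calR_n = I^{\ec}$, by proving that $\phi$ commutes with saturation. For that I would use the fact that $R\langle x, u\rangle$ is flat over $\calR_n$ and that $u_1, \ldots, u_r$ is a regular sequence on $R\langle x, u\rangle / J^{\ec}$, since that quotient is $\pi$-torsion-free, so that modding out by $(u)$ preserves $\pi$-torsion-freeness. Establishing this regularity is the step I expect to need the most care.

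Once it holds, $\calR_n / \phi(J^{\ec}) = (R\langle x, u\rangle / J^{\ec}) / (u)$ is torsion-free and its generic fiber equals that of $\calR_n / I$. Therefore $\phi(J^{\ec})\calR_n = I^{\ec}$, generated by the $g_i$.
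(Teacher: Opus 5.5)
Your Steps 1 and 2 are correct and match the paper's argument (your $c_j$ are the paper's $S_j$). They give $I=\phi(J)\calR_n\subseteq(g_1,\ldots,g_s)=\phi(J^{\ec})\subseteq I^{\ec}$. Your final deduction is also right: a saturated ideal sandwiched between $I$ and $I^{\ec}$ equals $I^{\ec}$.

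\textbf{The gap.} Everything rests on one step: that $\calR_n/\phi(J^{\ec})\cong Q/(u)Q$, with $Q:=R\langle x,u\rangle/J^{\ec}$, is $\pi$-torsion-free. Your justification for this is invalid. First, $\pi$-torsion-freeness of $Q$ does not make $u_1,\ldots,u_r$ a $Q$-regular sequence. Second, even a $Q$-regular sequence $u$ on a torsion-free $Q$ can create $\pi$-torsion in $Q/(u)Q$. More decisively, Step 3 never uses the hypothesis that $h_i \bmod \pi^M$ is free of the $y_j$, and without it the claim is false.

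\textbf{Counterexample.} Take $n=1$, $r=2$, $f_1=p_1=p_2=x$ and $f_2=x+\pi^{N+1}$. Then $I^{\ec}=\calR_1$. On the other side, $J^{\ec}=(x+\pi^Ny_1,\,y_1-y_2)$, because $\pi^N(y_1-y_2)\in J$ and the right-hand side is saturated by Proposition~\ref{prop:approxiation_sometimes_ok}. So $\phi(J^{\ec})=(x,\pi)\neq I^{\ec}$. Here $Q\cong R\langle u_1\rangle$ with $u_2=u_1-\pi$. Thus $u_1,u_2$ is $Q$-regular and $Q$ is torsion-free, yet $Q/(u)Q\cong k$. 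Your first attempt via $u_j\in J^{\ec}$ is also a dead end: for $f_1=p_1=x$ one has $u_1=y_1\notin J^{\ec}$.

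\textbf{The fix.} What you need is regularity of $\ol{u_1},\ldots,\ol{u_r}$ on $Q/\pi Q$, not on $Q$. This is exactly where the hypothesis enters (with $M\ge 1$). We have $\ol{J^{\ec}}=(\ol{q_1},\ldots,\ol{q_s})$ with $\ol{q_i}\in k[x_1,\ldots,x_n]$, and $\ol{u_j}=y_j-\ol{c_j}(x)$. Hence $Q/\pi Q\cong\bigl(k[x]/(\ol{q_1},\ldots,\ol{q_s})\bigr)[\ol{u_1},\ldots,\ol{u_r}]$, on which the $\ol{u_j}$ satisfy only Koszul relations.
\begin{itemize}
\item Suppose $\pi m=\sum_j u_ja_j$ in $Q$. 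Reducing modulo $\pi$ gives $\ol{a_j}=\sum_i\ol{B_{ij}}\,\ol{u_i}$ with $\ol{B}$ antisymmetric.
\item Lift $B$ antisymmetrically. This yields $\sum_ju_ja_j=\pi\sum_ju_ja_j'$.
\item Since $\pi$ is a nonzerodivisor on $Q$, you get $m=\sum_j u_ja_j'\in(u)Q$.
\end{itemize}

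With this inserted, your route goes through. It is a module-theoretic rephrasing of the paper's proof, which shows directly that $(g_1,\ldots,g_s)$ is saturated via Lemma~\ref{lem:saturation}. Every syzygy of $(\ol{g_j})=(\ol{q_j})$ is a syzygy of $(\ol{h_j})$, hence lifts to a syzygy of $(h_j)$ because $J^{\ec}$ is saturated. Specializing $y\mapsto S$ then gives a lift to a syzygy of $(g_j)$.
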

\begin{proof}
Denoting by $S_1, \dots, S_r \in \calR_n$  the power series such that 
$$f_i = p_i + \pi^N S_i , $$
we prove that $I^{\ec}$ is generated by the power series
\begin{equation*}
g_{j}(x_{1}, \, \ldots, \, x_{n}) := h_j\bigl( x_{1}, \, \ldots, \, x_{n}, \, S_{1}(x_{1}, \, \ldots, \, x_{n}), \, \ldots, \, S_{r}(x_{1}, \, \ldots, \, x_{n}) \bigr) , \qquad 1 \le j \le s ,
\end{equation*}
which implies our claim. 
To prove it, it is enough to show that  the ideal  $L \subset \calR_{n}$ generated by $g_{1}, \, \ldots, \, g_{s}$ is saturated and satisfies $I \subseteq L \subseteq I^{\ec}$. 

The latter condition is a specialization of the analogous containments $J \subseteq (h_1,\ldots, h_s ) \subseteq J^{\ec}$. 
For example, since $h_j \in J^{\ec}$, there exist power series $c_{1}, \ldots, c_{r}$ in $R\langle x_1, \ldots, x_n, y_1, \ldots, y_r\rangle$ and an integer $m$ such that
\begin{equation*}
\pi^{m}h_j = \sum_{i = 1}^{r}c_{i} \cdot \bigl( p_{i}(x_{1}, \ldots, x_{n}) + \pi^N y_{i} \bigr) .
\end{equation*}
Specializing $y_{i}$ to $S_{i}$, we see that
$\pi^{m}g_{j} = \sum_{i = 1}^{r}\tilde c_{i}' \cdot f_{i}$, with $c'_i\in \calR_n$ the specializations of $c_i$, hence that $g_{j} \in I^{\ec}$ for every $1 \le j \le s$.

Now the saturation of $L$. By Lemma~\ref{lem:saturation} part~\ref{item:is_saturated} it is enough to show that all the syzygies of $\{\ol{g_1}, \ldots, \ol{g_s}\}$ can be lifted to syzygies of $\{g_1, \ldots, g_s\}$, where $\ol{\cdot}$ denotes the reduction modulo~$\pi$ as usual. This is again a specialization argument. Indeed, since  
$h_j \bmod \pi^M$ does not involve the variables $y_{h}$, then we have 
\begin{equation*}
\ol{g_{j}} = \ol {{h}_{j}} = \ol{q_j}  . 
\end{equation*}
It follows that any syzygy $\ol c = (\ol{c_{1}}, \ldots, \ol{c_{s}})$ of the generators  $\{\ol{g_1}, \ldots, \ol{g_s}\}$  of $\ol L$ is also a syzygy of the  generators $\{\ol{h_1}, \ldots, \ol{h_s}\}$ of $\overline{J^{\ec}}$. Since $J^{\ec}$ is saturated, Lemma~\ref{lem:saturation} part~\ref{item:is_saturated} implies that there exists a syzygy $c = (c_{1}, \ldots, c_{s})$ for $\{h_1, \ldots, h_s\}$ in $R\langle x_1, \ldots, x_n, y_1, \ldots, y_r\rangle$ whose reduction is $\ol c$. In particular, from $c$ we can obtain a lift $c' = (c_{1}', \ldots, c_{s}')$ of $\ol c$ in $\Syg_{\{g_1, \ldots, g_s\},L}$ by specializing the $y_i\mapsto S_i$ (notice that $\ol{c_j}$ does not contain the variables $y_h$, hence this specialization does not change the reduction)
\end{proof}

The primary advantage of $J$ over $I$, in Lemma~\ref{lem: approximation is useful}, is that we know exact generators of it and we can compute its saturation by passing to the polynomial world. 
Denote by $\tilde{J}$ the ideal of $R[x_{1},  \ldots,  x_{n}, y_{1},  \ldots,  y_{r}]$ generated by the  polynomials $p_i + \pi^N y_i$, (which are also generators for $J$). Using \cite[Algorithm~$18$ and Theorem~$19$]{MR3295981} we can compute generators for its saturation in the polynomial ring
\begin{equation}    \label{eq:tilde J ec}
\tilde{J}^{\ec} :=  \left\{ f \in R[x_{1},  \ldots,  x_{n}, y_{1},  \ldots,  y_{r}] :  a f \in \tilde J \text{ for some } a \in R \setminus \{0\} \right\} .
\end{equation}
The next proposition shows that the generators of $\tilde{J}^{\ec}$ generate $J^{\ec}$ too.

\begin{Proposition} \label{pro: aux 1; lem: approximation is useful}
    Suppose the valuation on $K$ is discrete and let $I$ be an ideal of $R[x_1, \ldots, x_n]$. 
    If $I$ is saturated, i.e.,  $I = \bigl(I \cdot K[x_1, \ldots, x_n]\bigr) \cap R[x_1, \ldots, x_n]$, then its extension $I \cdot \calR_n$ is saturated in $\calR_n$.
\end{Proposition}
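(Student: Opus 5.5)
We prove that $I\cdot\calR_n$ is saturated by flatness: the claim is that $\pi$ is a non-zero-divisor on $\calR_n/I\calR_n$. Set $P = R[x_1,\ldots,x_n]$ and $M = P/I$. Since the valuation is discrete, saturation of $I$ means that multiplication by $\pi$ on $M$ is injective. Moreover $I\calR_n$ is saturated if and only if multiplication by $\pi$ on $\calR_n/I\calR_n = M\otimes_P \calR_n$ is injective, because every non-zero $r\in R$ is a unit times a power of $\pi$. So it suffices to show that injectivity of $\pi\colon M\to M$ is preserved after tensoring with $\calR_n$ over $P$. This follows once we know that $\calR_n$ is flat over $P$: tensoring the exact sequence $0\to M\xrightarrow{\pi} M$ with a flat module keeps it exact.

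The main step is therefore flatness of $P\to\calR_n$. First note that $\calR_n$ is the $\pi$-adic completion of $P$. Indeed, $\calR_n/\pi^j\calR_n = (R/\pi^j)[x_1,\ldots,x_n] = P/\pi^jP$, because the coefficients of an element of $\calR_n$ tend to $0$, and $\calR_n$ is $\pi$-adically complete and separated. Since $P$ is Noetherian ($R$ is a DVR), completion with respect to an ideal is flat: $\hat P$ is flat over $P$ by the standard result, e.g. \cite[Theorem~8.8]{Matsumura} or the corresponding Stacks Project lemma on completions of Noetherian rings. This gives flatness. We also use that $\calR_n$ is Noetherian, which the paper has already cited.

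For a more self-contained argument avoiding flatness, one can argue directly with Lemma~\ref{lem:saturation}. Let $g_1,\ldots,g_r$ generate $I$ in $P$. The syzygies of $\ol{g_i}$ in $k[x]$ are the same whether we view them in $\ol{I}$ or in $\ol{I\calR_n}$, since the reductions coincide. Saturation of $I$ in $P$ means that every syzygy of the $\ol{g_i}$ lifts to a syzygy of the $g_i$ in $P^r$; this is the polynomial analogue of Lemma~\ref{lem:saturation}\ref{item:is_saturated}, whose proof is identical. These lifts are also syzygies in $\calR_n^r$, so Lemma~\ref{lem:saturation}\ref{item:is_saturated} applied in $\calR_n$ gives $I\calR_n = (I\calR_n)^{\ec}$.

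The only point needing care in this route is the polynomial-ring version of the direction ``saturated $\Rightarrow$ syzygies lift''. Take a syzygy $\ol s$ with lift $\tilde s$. Then $\sum \tilde a_i g_i = \pi h$ with $h\in P$, so $h\in I$ by saturation. Write $h = \sum b_i g_i$; then $\tilde s - \pi b$ is a syzygy lifting $\ol s$. This argument is elementary, so I would present the second route and mention flatness only as a remark.
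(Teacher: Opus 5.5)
The route you say you would present is the paper's proof: choose generators $\calG$ of $I$, use saturation of $I$ to lift every syzygy of $\ol\calG$ to a syzygy of $\calG$ inside $R[x_1,\ldots,x_n]$, and conclude with Lemma~\ref{lem:saturation} part~\ref{item:is_saturated} in $\calR_n$. Your explicit correction $\tilde s-\pi b$ is exactly the step the paper leaves to the reader. Your flatness argument is also correct and genuinely different. It identifies $\calR_n$ with the $\pi$-adic completion of the Noetherian ring $R[x_1,\ldots,x_n]$ and bypasses Lemma~\ref{lem:saturation} entirely, showing more generally that $-\otimes_{R[x_1,\ldots,x_n]}\calR_n$ preserves $\pi$-torsion-freeness. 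It does, however, rely on flatness of completions, whereas the syzygy route uses only results the paper has already proved.
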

\begin{proof}
    Once we choose a set $\calG$ of generators for $I$, the statement follows from Lemma \ref{lem:saturation}. Indeed $I \cdot \calR_n$ is also generated by $\calG$ and, denoting by $\ol\calG$ the set  of reductions of the elements of $\calG$, any syzygy of $\ol\calG$ can be lifted to a syzygy of $\calG$ since $I$ is saturated.
\end{proof}

\subsection{... and reduced Gr\"obner bases}

We keep the setting of Lemma~\ref{lem: approximation is useful}. 
If we can find generators for $\tilde J^{\ec}$ (defined in Equation \eqref{eq:tilde J ec}) whose reduction modulo~$\pi^M$ are expressible only using the variables $x_i$, then Proposition \ref{pro: aux 1; lem: approximation is useful}, together with Lemma~\ref{lem: approximation is useful}, tells us there is a set of generators for $I^{\ec}$ with the same reductions modulo~$\pi^M$, thus solving Problem \ref{prob:compute_saturation}. It might happen, though, that $\tilde J^{\ec}$ does not have a set of generators with this property, while $J^{\ec}$ does, e.g.,  if $\tilde J^{\ec} = (1+ \pi y_1)$, which gives $J^{\ec} = (1)$. 

To really work with $J^{\ec} \subset R\langle x_{1},  \ldots,  x_{n}, \, y_{1},  \ldots,  y_{r} \rangle$, we can use Gr\"obner bases in the context of convergent power series, as defined in \cite{MR4007445}. 
In loc.\ cit., for any monomial order $<_\omega$ (a well-order on monomials compatible with multiplication) the authors define an order $<$ on terms $a\cdot m$, with $a\in K^\times$ and $m$ a monomial, by first looking at the valuation of $a$ and then at the monomial $m$; in particular $am < a'm'$ either if $v(a) > v(a')$ or if $v(a)=v(a')$ and $m <_\omega m'$. In particular, convergent power series $f$ always have a leading (i.e. maximal) term $\LT(f)$ and a Gr\"obner basis of an ideal $I$ is a set of elements $f_i \in I$ such that, for each $f\in I$, the leading term $\LT(f)$ is a multiple of at least one of the $\LT(f_i)$'s. We refer to the original article for properties and algorithms, and in particular to \cite[Algorithms 1 and 2]{MR4007445}, namely the analogues of division with remainder and  Buchberger's algorithm.

To achieve uniqueness we give the following definition.
\begin{Definition}
\label{def: reduce GB}
    A Gr\"obner basis $\{g_1, \ldots, g_u\}$ of an ideal $I \subset \calR_n$ is \emph{reduced} if, for each $i$, the series $g_i$ is monic (i.e. $\LT(g_i) = 1 \cdot m$ with $m$ a monomial), no term of $g_i$ is a multiple of $\LT(g_j)$ for $j\neq i$, and moreover no term of $g_i-\LT(g_i)$ is a multiple of $\LT(g_i)$. 
\end{Definition}

Notice that the last condition is automatic in the usual polynomial context, since if one monomial divides another, then the latter is bigger. This is not always true in the context of convergent power series: for example, in the series $f = x + \pi xy$, the term $\pi xy$ is multiple of the term $x$, even though we have $\pi xy<x$ (no matter which is the monomial order we started with). Notice, also, that this ``problem'' can be solved by dividing $f$ by the unit  $(1+\pi y)$.

A reduced Gr\"obner basis is always minimal, in the sense of \cite{MR4007445}, i.e., the leading terms of the $g_i$'s are the skeleton  of $\LT(I)$ (see Definition 2.9 in loc.\ cit.), or, equivalently, the leading terms of the $g_i$'s do not divide each other. 
This easily implies the uniqueness of reduced Gr\"obner bases. Suppose $\calG_1, \calG_2$ are two such distinct bases of the same ideal; then by the previous observation, for all $f \in \calG_{1}$ there is a $g \in \calG_{2}$ with the same leading term, and conversely. If we take $f$ in $ \calG_1 \setminus \calG_{2}$, then $f - g$ is non-zero and in particular its leading term is a multiple of one of the leading terms in $\LT(\calG_1)=\LT(\calG_2)$. Anyway, since the leading terms of $f$ and $g$ cancel out, then $\LT(f-g)$ is either a term of $f-\LT(f)$ or a term of $g-\LT(g)$, contradicting the reducedness.
Notice that, without the last condition in Definition~\ref{def: reduce GB}, uniqueness would not be guaranteed, consider for example the ideal  $(x_{1}, \, x_{2}) = (x_{1}, \, x_{2} + \pi x_{2})$.

We now see how to find the reduced Gr\"obner basis of a saturated ideal $I \subset \calR_n$ starting from a minimal Gr\"obner basis $g_1, \ldots, g_u$ by applying the division algorithm \cite[Algorithm~1]{MR4007445}. 
We start ``reducing'' $g_1$ as follows: we apply the  algorithm to $g_1 - \LT(g_1)$, dividing it by $g_1, g_2, \ldots, g_u$ and finding power series $q_1, \ldots, q_u, r_1$ such that 
\begin{equation}\label{eq:division_g1_LT}
g_1 - \LT(g_1) = q_1 g_1 + \ldots q_u g_u + r_1 ,
\end{equation}
with $r_1$ only containing terms that are not multiple of any $\LT(g_i)$. 
In particular, the power series 
$$g_1' := \LT(g_1) + r_1 ,$$
still belongs to the ideal $I$ and, together with $g_2, \ldots, g_u$, gives a minimal Gr\"obner basis of $I$, since $\LT(g_1') = \LT(g_1)$ by construction. Since $I$ is saturated and $g_1'$ is part of a minimal basis, we deduce that  $\LT(g_1) = c_1 \cdot m$, with $m$ a monomial and $c_1$ an invertible element in $R$, hence we can make $g_1'$ monic by dividing by $c_1$. 
Moreover, no term of $g_1' - \LT(g_1) = r_1$ is a multiple of $\LT(g_1')$ and $g_1'$ has no term multiple of $\LT(g_j)$ for $j \neq 1$ (it is true for the terms of $r_1$ by construction and, by minimality, also for $\LT(g_1)$). 
If we continue, and successively substitute $g_i$ with $\frac{1}{c_i}(\LT(g_i)+r_i)$, with the analogous $c_i$ and $r_i$, we find a reduced Gr\"obner basis of $I$.

We have then proved the following proposition.

\begin{Proposition}
    For each saturated ideal $I$ of $\calR_n$, there exists a unique reduced Gr\"obner basis for a fixed monomial order. Moreover, there exists an algorithm to compute such a basis, starting from a minimal Gr\"obner basis of $I$.
\end{Proposition}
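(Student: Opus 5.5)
Existence comes from the reduction procedure described just before the statement, and uniqueness from the leading-term argument given after Definition~\ref{def: reduce GB}; the plan is to make both rigorous and to check that the procedure terminates with the required properties.

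\textbf{Existence and the algorithm.} Start from a Gr\"obner basis of $I$, which exists by \cite{MR4007445}, Noetherianity of $\calR_n$ in the discrete case and Buchberger's algorithm there. Discard elements whose leading monomial is divisible by another leading monomial to obtain a minimal basis $g_1,\ldots,g_u$, whose leading terms form the skeleton of $\LT(I)$. Then process $i=1,\ldots,u$ in turn. Divide $g_i-\LT(g_i)$ by the current basis using \cite[Algorithm~1]{MR4007445}, getting a remainder $r_i$ as in \eqref{eq:division_g1_LT}, and replace $g_i$ by $\LT(g_i)+r_i$.

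The key steps to verify are as follows.
\begin{enumerate}[(i)]
\item Every term of $g_i-\LT(g_i)$ is smaller than $\LT(g_i)$. The division output therefore has all quotients times divisors and the remainder strictly below $\LT(g_i)$, so $\LT(\LT(g_i)+r_i)=\LT(g_i)$, minimality is kept, and the new element still lies in $I$.
\item The coefficient $c_i$ of $\LT(g_i)$ is a unit. If $v(c_i)>0$, then $g_i'/\pi\in I^{\ec}=I$ has leading term $(c_i/\pi)m$, which is larger than $c_im$. It must be divisible by some $\LT(g_j)$. This forces either a leading term with strictly smaller valuation on the same monomial, contradicting that $c_im$ is the leading term of elements of $I$ with monomial $m$ in the skeleton, or divisibility by another $\LT(g_j)$, contradicting minimality. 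This is where saturation enters; I would phrase it using the skeleton of $\LT(I)$ from \cite[Definition~2.9]{MR4007445}.
\item Later replacements do not spoil earlier ones. The set of leading terms never changes, and the conditions in Definition~\ref{def: reduce GB} only refer to leading terms. So once $g_i'$ is reduced with respect to that set, it stays reduced.
\end{enumerate}

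\textbf{Uniqueness.} Given two reduced bases $\calG_1,\calG_2$, minimality implies that both sets of leading terms equal the skeleton of $\LT(I)$. Monicity then matches elements pairwise with equal leading terms. For a matched pair $f\neq g$, the nonzero element $f-g\in I$ has its leading term among the non-leading terms of $f$ or of $g$. That term is divisible by some element of the skeleton, which contradicts reducedness: if it is divisible by their common leading term, the last clause of the definition is violated, and otherwise the second clause is.

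\textbf{Main obstacle.} The most delicate point is that \cite[Algorithm~1]{MR4007445} for convergent series is an infinite process converging $\pi$-adically. One must ensure it returns a remainder in $\calR_n$ whose terms are all non-multiples of the leading terms, and an effective approximation at each finite precision. I would rely on the convergence statement for the division algorithm in loc.\ cit. The only genuinely new content is step (ii), that saturation forces unit leading coefficients.
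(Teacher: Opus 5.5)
Your proposal is correct and follows the paper's argument. Existence comes from replacing each element $g_i$ of a minimal basis by $\LT(g_i)+r_i$ and using saturation to force a unit leading coefficient. Your division-by-$\pi$ argument in (ii) is valid because the leading term has minimal valuation, so the whole series lies in $\pi\calR_n$; it makes explicit the step the paper leaves implicit. Uniqueness comes from the cancellation of the common leading term in $f-g$, as in the paper.

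One small correction: when you extract a minimal basis, discard elements according to divisibility of leading \emph{terms} (valuation included), not leading monomials. The statement already assumes a minimal basis is given, so this does not affect the proof.
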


It should be noted that Algorithm~1 in \cite{MR4007445} does not terminate in a finite amount of time, but, as observed in loc.\ cit., it is possible to compute the result (i.e., the power series $q_1, \ldots, q_u, r_1$ in \eqref{eq:division_g1_LT}) with a fixed precision $O(\pi^M)$ in a finite amount of time, starting from an input known with the same precision $O(\pi^M)$. Hence, we can compute the reduced Gr\"obner basis of a saturated ideal with any fixed precision in a finite amount of time, starting from a minimal Gr\"obner basis known with the same precision. 

Reduced Gr\"obner  bases are useful, in the context of Lemma~\ref{lem: approximation is useful}, because of the following proposition.

\begin{Proposition}\label{prop:reduced_basis_no_y}
    Suppose the valuation on $K$ is discrete, let $J = J^{\ec}$ be a saturated ideal of $R\langle x_1, \ldots, x_n, y_1, \ldots, y_r\rangle$ and $M$ be a positive integer. The ideal $J$ admits a set of generators that, reduced modulo $\pi^M$, does not depend on the variables $y_j$ if and only if the reduced Gr\"obner basis of $J$, reduced modulo $\pi^M$, does not depend on the variables $y_j$.
\end{Proposition}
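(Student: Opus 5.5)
The ``if'' direction is immediate: the reduced Gr\"obner basis of $J$ is in particular a set of generators of $J$. For the converse, assume $J=(h_1,\ldots,h_s)$ with $h_i\equiv q_i(x_1,\ldots,x_n)\pmod{\pi^M}$. Let $\{g_1,\ldots,g_u\}$ be the reduced Gr\"obner basis of $J$ for the fixed monomial order. I will work in the quotient $B_M:=R\langle x,y\rangle/\pi^M = (R/\pi^M)[x_1,\ldots,x_n,y_1,\ldots,y_r]$ and write $J_M$ for the image of $J$ there. By hypothesis $J_M = E\cdot B_M$, where $E\subset (R/\pi^M)[x_1,\ldots,x_n]$ is the ideal generated by the images of the $q_i$. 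The key structural fact is that $J_M$ is homogeneous for the $\N^r$-grading by $y$-multidegree. Indeed, if $F=\sum_\alpha F_\alpha(x)\,y^\alpha\in J_M$, then each coefficient $F_\alpha$ lies in $E$, because $B_M$ is free over $(R/\pi^M)[x]$ with basis the monomials $y^\alpha$. Consequently each component $F_\alpha y^\alpha$ lies in $J_M$, and moreover $y^\beta G\in J_M$ with $G\in (R/\pi^M)[x]$ implies $G\in E\subset J_M$.

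Next I use that $J$ is saturated. As observed before the preceding proposition, every $\LT(g_j)$ then has a unit coefficient, hence valuation $0<M$. Fix $g=g_j$ with $\LT(g)=x^a y^b$, and decompose $\ol g^{(M)}:=g\bmod \pi^M=\sum_\alpha G_\alpha(x)y^\alpha$. Suppose some component $G_\alpha y^\alpha$ is nonzero, and does not contain the monomial $x^ay^b$. By the previous paragraph it lies in $J_M$, so it lifts to some $c\in J$ with $c\equiv G_\alpha y^\alpha \pmod{\pi^M}$. Since terms are ordered first by valuation, and $G_\alpha y^\alpha$ has a term of valuation $<M$, the leading term $\LT(c)$ is a term of $G_\alpha y^\alpha$, up to its coefficient; more precisely, its monomial is the monomial of a term of $g-\LT(g)$. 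Because $c\in J$, this monomial is divisible by some $\LT(g_i)$, which contradicts Definition~\ref{def: reduce GB}: the case $i\ne j$ is excluded by the second condition and $i=j$ by the third. Hence $g\equiv y^b H(x)\pmod{\pi^M}$ for a polynomial $H$ whose leading term is $x^a$ with coefficient $1$.

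Finally I show that $b=0$. From $y^bH\in J_M$ and the grading argument, $H\in J_M$, so there is $c\in J$ with $c\equiv H\pmod{\pi^M}$. As before, $\LT(c)=x^a$. It must be divisible by some $\LT(g_i)$, which then properly divides $\LT(g_j)=x^ay^b$ if $b\neq0$. This contradicts minimality of the reduced basis, since leading terms of a reduced basis do not divide each other, and the case $i=j$ is impossible for degree reasons. Thus $b=0$ and $g_j\bmod\pi^M=H(x)$ does not involve the $y$'s.

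The step I expect to require the most care is the claim that $\LT(c)$ is a term of $G_\alpha y^\alpha$ for any lift $c$. This must be checked against the term order of \cite{MR4007445}, where valuation is compared first: terms of $c$ of valuation $\ge M$ are smaller than any term of valuation $<M$, and the terms of valuation $<M$ of $c$ coincide with those of $G_\alpha y^\alpha$, up to units congruent mod $\pi^M$. This is exactly why the hypothesis $M\ge1$ and unit leading coefficients, coming from saturation, are needed.
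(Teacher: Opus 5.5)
Your argument is correct, but it takes a different route from the paper's. The paper proves the nontrivial direction algorithmically. Starting from generators that are $y$-free modulo $\pi^M$, it runs the Buchberger-type Algorithm~2 of \cite{MR4007445}, extracts a minimal basis and interreduces with Algorithm~1. It asserts that each step preserves $y$-freeness modulo $\pi^M$, and implicitly relies on uniqueness of the reduced basis.

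You argue structurally instead. The hypothesis makes $J \bmod \pi^M$ an ideal extended from $(R/\pi^M)[x_1,\ldots,x_n]$, hence homogeneous for the $y$-multigrading. Since terms are compared by valuation first, the leading term of any $c\in J$ with $c\not\equiv 0 \pmod{\pi^M}$ can be read off from $c \bmod \pi^M$, given $M\ge 1$ and monic leading terms. So a stray $y$-component of $g_j \bmod \pi^M$, or a factor $y^b\neq 1$ in $\LT(g_j)$, produces an element of $J$ whose leading term violates a condition of Definition~\ref{def: reduce GB}.

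Your approach is self-contained. It avoids checking that S-polynomials, divisions and finite-precision truncations preserve the property, including generators that vanish modulo $\pi^M$ but whose leading terms involve the $y_j$. It also applies to any reduced basis without invoking uniqueness. The paper's argument is shorter and ties the statement directly to the procedure it implements.

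One detail should be stated explicitly. In the step ``$\LT(c)=x^a$'' you need that $x^a$ is the largest monomial of $H$ with unit coefficient. This follows because $x^ay^b$ is the largest unit-coefficient monomial of $g_j$ and the monomial order is compatible with multiplication by $y^b$. Also, $\LT(c)$ is $x^a$ times a unit rather than $x^a$ itself, which is harmless.
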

\begin{proof}
    One implication is true, since by \cite[Proposition~$2.18$]{MR4007445} Gr\"obner bases are sets of generators.
    For the other implication, if $\calG$ is a set of generators of $J$ that does not depend on the variables $y_j$, then applying  \cite[Algorithm~$2$]{MR4007445} to $\calG$ yields a Gr\"obner basis with the same property and, after extracting a minimal basis and applying the reduction process described above (in particular \cite[Algorithm~$1$]{MR4007445}), the property is preserved. 
\end{proof}

The above proposition tells us that the condition on $J^{\ec} \bmod{\pi^M}$ in Lemma~\ref{lem: approximation is useful} can be checked using the reduced  Gr\"obner basis, which we know how to compute. 
This gives an algorithm for Problem~\ref{prob:compute_saturation}: we can pass from the ideal $I \subset \calR_n$, which we know with finite approximation, to the ideal $J\subset \calR_{n+r}$ defined in Lemma~\ref{lem: approximation is useful}, then compute a reduced Gr\"obner basis $\calG$ for $J^{\ec}$ with precision $O(\pi^M)$, and, if the condition in the lemma is satisfied, return $\calG$. The lemma ensures that such an output is correct, that is that $\calG$ approximates a set of generators of $I^{\ec}$.  This is made precise in Algorithm \ref{alg: check condition; lem: approximation is useful}.

\begin{algorithm}[h]
\caption{Either FAIL or solve Problem \ref{prob:compute_saturation}} 
\label{alg: check condition; lem: approximation is useful}

    \SetKwInOut{Input}{Input}\SetKwInOut{Output}{Output}
    \SetKwFor{ForEach}{for each}{do}{end}

    \Input{A positive integer $M$ 
    and a set $\bigl\{p_1 + O(\pi^N), \ldots, p_r+ O(\pi^N) \bigr\}$ of generators of an ideal $I \subset \calR_n$.}
    \Output{FAIL or a set of generators of $I^{\ec}$  known with precision $O(\pi^M)$.  }

    \BlankLine

    Compute generators for the saturation $\tilde J^{\ec}$, inside $R[x_1, \ldots, x_n, y_1, \ldots, y_r]$, of the ideal $\tilde J = (p_1 + \pi^Ny_1, \ldots, p_r + \pi^Ny_r)$ (see \cite[Algorithm~$18$]{MR3295981});
    \\
    Choose a monomial order and compute, with precision $O(\pi^M)$, a Gr\"obner basis $\calG$ of $\tilde J^{\ec} \cdot R\langle x_1, \ldots, x_n, y_1, \ldots, y_r \rangle$ (see \cite[Algorithm~$2$]{MR4007445});
    \\
    \ForEach{$g \in \calG$}{
        if $g = O(\pi^M)$ or if $\LT(g)$ is multiple of $\LT(g')$ for some $g'\in \calG \setminus \{g\}$, then delete $g$ from $\calG$;
    } \,   
    \ForEach{$g \in \calG$}{
        Compute, with precision $O(\pi^M)$, the remainder $r_{g}$ of the division of $g-\LT(g)$ by $\calG$ (see \cite[Algorithm~$1$]{MR4007445}); 
        \\
        Compute the coefficient $c$ of the leading term of $g$;
        \\
        \If{$\bigl(\tfrac{1}{c}\big(\LT(g)+r_{g}\big) \bmod{\pi^M}\bigr)$ contains some variable $y_{i}$}
                {
            \Return{FAIL;}
        }
        \Else{
        Replace $g$ with $\tfrac{1}{c}\bigl(\LT(g)+r_{g}\bigr) + O(\pi^M)$ in $\calG$;
        }
    }     
    \Return{$\calG$;}
\end{algorithm}

In more detail, the algorithm proceeds as follows. Using an algorithm for polynomial rings, we compute generators for the ideal $\tilde J^{\ec}$ in Equation~\eqref{eq:tilde J ec}. These are also generators for $J^{\ec}$ (Proposition \ref{pro: aux 1; lem: approximation is useful}). Then, using  Algorithm~$2$ in \cite{MR4007445} (the analogue of Buchberger's algorithm), we compute a Gr\"obner basis for the power series ideal $J^{\ec}$  with precision $O(\pi^M)$;   
since we work with finite precision and by \cite[Theorem~3.8]{MR4007445},  
Buchberger's algorithm terminates after a finite number of steps and gives a correct result.

The next step in the algorithm is to extract a minimal Gr\"obner basis of $J^{\ec}$ from this Gr\"obner basis. We do so either by checking directly  the minimality condition or, when it is not possible, namely if a series $g$ is equal $O(\pi^M)$ and consequently we cannot compute its leading term, we just delete the series since, by the saturatedness of $J^{\ec}$, all elements of a minimal basis have leading term with valuation $0$. 

Finally, we reduce the Gr\"obner basis using the procedure described below Definition \ref{def: reduce GB}.
During the reduction process we check whether the condition in Lemma~\ref{lem: approximation is useful} is satisfied: if the reduction modulo~$\pi^M$ of some element in the reduced basis depends on some variable  $y_j$, then we cannot apply Lemma~\ref{lem: approximation is useful} and the algorithm fails; otherwise, we output the approximation of the reduced basis itself.

\numberwithin{equation}{section}
\section{An application to a Thue equation}
\label{section example}

\newcommand{\Norm}{\mathrm{Norm}}

In this final section, we show an application of Theorem~\ref{main theorem section introduction}, together with Algorithm~\ref{alg: generators Il},
solving  the Thue equation
\begin{equation}\label{eq:Thue}
	X^{5}-X^{4}Y+X^{3}Y^{2}-X^{2}Y^{3}-Y^{5}=1 
\end{equation}
over the integers using Skolem's method (see, e.g., \cite{skolem1934verfahren, StroekerTzanakis, Lombardo}). We recall that, despite Skolem's method being a fun application of $p$-adic analysis, there are faster and more general methods to solve Thue equations, see e.g. \cite{MR1412969, Waldschmidt}, and, for Thue equations in families, \cite{AMZ,Marzenta}.

Equation \ref{eq:Thue} is associated to the number field
\begin{equation*}
	K\coloneqq\Q[\alpha]/(\alpha^{5}-\alpha^{4}+\alpha^{3}-\alpha^{2}-1) ,
\end{equation*}
since \eqref{eq:Thue} is equivalent to 
\begin{equation}\label{eq:Thue2}
	\Norm_{K/\Q} (X-Y \alpha)=1 .
\end{equation}

Since $K$ has degree $5$ and the group of units of $\Z[\alpha]$ has rank $2$, it is possible to apply Skolem's method, which, for any chosen prime $p$, relates solutions of \eqref{eq:Thue} to zeros of $p$-adic power series. We choose $p=5$ and go through the method. 

We have performed computations using a MAGMA (\cite{MAGMA}) script which applies Skolem's method to Thue equations related to quintic number fields of signature $(1,2)$, assuming that the fundamental units are known. The script is attached to the arXiv version of this paper.

If $(x,y)$ is an integer solution of \eqref{eq:Thue2}, there exist two integers $n_{1},\,n_{2}$ such that
\begin{equation} \label{eq:Thue_with_units}
	x-y\alpha=\pm u_{1}^{n_{1}} u_{2}^{n_{2}} ,
\end{equation}
where 
\begin{equation*}
	u_{1}\coloneqq \alpha^{4}-\alpha^{3}+\alpha^{2}-\alpha \quad\text{and}\quad u_{2}\coloneqq \alpha^{3}-\alpha^{2}-1
\end{equation*}
are the fundamental units of $K$ (see \url{https://www.lmfdb.org/NumberField/5.1.2297.1} in the \cite{lmfdb} database).  
The idea of Skolem's method is to embed $\Z[\alpha]$ in $\Z_5[\alpha]:= \Z[\alpha]\otimes_{\Z}\Z_{5}$: in this ambient space, the integer solutions of \eqref{eq:Thue} lie in the intersection $B\,\cap\,\ol C$, where $B$ is the $\Z_5$-subspace 
spanned by $1$ and $\alpha$, and $\ol C$ is the $5$-adic closure 
of the set $C \coloneqq \Z[\alpha]^\times = \{\pm u_{1}^{n_{1}}u_{2}^{n_{2}} \,\colon\, n_{1},\,n_{2}\in\Z\}$. Since $C$ is parametrized by two variables, we expect $\ol C$ to have dimension at most $2$, hence the intersection $B\,\cap\,\ol C$ to be finite.

Given an integer solution of \eqref{eq:Thue}, there are, a priori, $25$ possibilities for the pair $(x \bmod 5, y \bmod 5)$; accordingly,  we separately study the $25$ intersections $B \cap \ol C \cap U_{a,\,b}$, where $U_{a,\,b}$ is the open subset of $\Z_5[\alpha]$ consisting of elements congruent to $a - b\alpha$ modulo~$5$. Some residue disks are not relevant since the congruence modulo~$5$ already shows that \eqref{eq:Thue} cannot hold for some pairs $(x \bmod 5, y \bmod 5)$. We are left with five pairs $(a,b)$ to analyze, namely 
$(1,0)$, $(0,4)$,  $(4,4)$, $(3,4)$, $(2,4)$.
    
We now focus on the case $(a,b) = (0,4)$.
We start by parametrizing the set 
$C \cap U_{0,4}$, i.e.,  the units of $\Z[\alpha]$ that are congruent to $\alpha$ modulo~$5$. The set of units congruent to $1$ modulo $5$ is a subgroup of $\Z[\alpha]^\times$ isomorphic to $\Z^2$, with generators 
\begin{align*}
	v_{1}&\coloneqq 5\alpha^{4}-5\alpha^{3}+5\alpha^{2}-5\alpha-4 , \\
	v_{2}&\coloneqq -17490060\alpha^{4}+27069090\alpha^{3}+18075\alpha^{2}+5549965\alpha-17135289 .
\end{align*}
Then, $C \cap U_{0,4}$ is the coset
\begin{equation}
	u \cdot \langle v_1, v_2\rangle = \{u v_1^{n_1} v_2^{n_2} \,:\, n_1, n_2 \in \Z \} ,
\end{equation}
with $u=9579030\alpha^{4}+17508135\alpha^{3}-11940095\alpha^{2}-17135289\alpha-17490060$.
Now, since both $v_1$ and $v_2$ are congruent to $1$ modulo $5$, the function $(n_1, n_2) \mapsto   u \cdot v_1^{n_1} v_2^{n_2}$ can be expressed as a power series in $n_1, n_2$. Indeed, denoting by $\exp$ and $\log$ the usual power series for the exponential and the logarithm, then $\exp(x)$ and $\log(1 + x)$ converge $5$-adically for $x$ multiple of $5$. In particular, we can compute $\log(v_1)$ and $\log(v_2)$, which are elements in $\Z_5[\alpha]$ that are congruent to $0$ modulo~$5$, and write 
\begin{equation} \label{eq:Cbar}
	u \cdot v_1^{n_1} v_2^{n_2} =u\cdot\exp\bigl(n_{1}\cdot \log(v_{1})+n_{2}\cdot\log(v_{2})\bigr) = \sum_{i=0}^{4} F_{i}(n_{1},n_{2}) \cdot \alpha^i
\end{equation}
for suitable convergent power series $F_{i} \in \Q_5\langle x_1, x_2\rangle$.  
Then, we have
\begin{equation*}
	C \cap B \cap U_{0,4}  = \left\{  u v_1^{n_1} v_2^{n_2} \,:\, n_1, n_2 \in \Z , \, F_{2}(n_{1},\,n_{2})=F_{3}(n_{1},\,n_{2})=F_{4}(n_{1},\,n_{2})=0 
	\right\} .
\end{equation*}
It follows that the set of pairs $(n_1, n_2)$ we look for is a subset of the common zeros of the power series $F_{2},\,F_{3},\,F_{4}$ in $(\Z_5)^2$. Notice that such common zeros correspond to the points in $\ol C \cap B \cap U_{0,4} $, since, when we look at $(n_1, n_2)$ in $\Z_5^2$, and not only in $\Z^2$, then formula \eqref{eq:Cbar} parameterizes $\ol C \cap U_{0,4}$. 
Without changing the notation, we multiply each $F_{i}$ by a suitable constant so that it has $5$-integral coefficients not all divisible by $5$ (this is possible since the $F_i$ are convergent power series). Then
\begin{align*}
	F_{2} &= 108 + 3t_{2} + 5( 13 t_{1} + 9 t_{1}^{2} + 7 t_{1}t_{2} + 24 t_{2}^{2} ) + 5^2 ( 3t_{1}^{3} + 4 t_{1}^{2}t_{2}  + 3 t_{1}t_{2}^{2} + t_{2}^{3} )  + 5^3(\cdots) , \\
	F_{3} &= 61  + 81t_{2} +  5( t_{1}^{2} + 23 t_{1}t_{2} + 9 t_{2}^{2} ) +5^2 ( 2 t_{1}^{3} + 3 t_{1}^{2}t_{2} + t_{2}^{3} ) + 5^3(\cdots) , \\
	F_{4} &= 83  + 93t_{2} + 5( 12 t_{1} + 9 t_{1}^{2} + 16 t_{1}t_{2} + 7 t_{2}^{2} ) + 5^2 ( t_{1}^{3} + 3 t_{1}^{2}t_{2}  + t_{1}t_{2}^{2} + t_{2}^{3} )  + 5^3(\cdots) .
\end{align*}

Let $I$ be the ideal of $\Z_5\langle t_1,t_2\rangle$ generated by $F_2,F_3,F_4$ and let $\ol I$ be its reduction modulo~$5$. Since
\begin{equation*}
	\Fbb_{5}[t_{1},\,t_{2}]\bigl/ \ol I = \Fbb_{5}[t_{1},\,t_{2}]\bigl/\bigl(\ol{F_{2}},\,\ol{F_{3}},\,\ol{F_{4}}\bigr) = \Fbb_{5}[t_{1},\,t_{2}]\bigl/\bigl( t_2+1\bigr) \cong\Fbb_{5}[t_{1}]
\end{equation*}
is not finite, we cannot apply Theorem~\ref{main theorem section introduction}. Applying the first step of the saturation process (see Equation~\eqref{equation ideals Icerchietto n} and Algorithm~\ref{alg: generators Il}), we see that the ideal $I_1$, which has the same common zeros as $I$, is generated, modulo $5$, by $t_{2}+1$ and $t_{1}^{2}-t_{1}$. Since
\begin{equation*}
	\Fbb_{5}[t_{1},\,t_{2}]/(t_{2}+1,\,t_{1}^{2}-t_{1})\cong\Fbb_{5} \times \F_5 \text{,}
\end{equation*}
Theorem~\ref{main theorem section introduction} implies that the number of $5$-adic integer common zeros of the power series $F_{2},\,F_{3},\,F_{4}$ is at most $2$. It follows that the intersection $B\,\cap\,C\,\cap U_{0,\,4}$ contains at most two points, and in particular that \eqref{eq:Thue} has at most two solutions $(x,\,y)$ congruent to $(0,\,4)$ modulo~$5$. Indeed we know two solutions, which are $(0,\,-1)$ and $(5,\,4)$.

Repeating this process for each residue polydisk, we conclude that the number of integer solutions to \eqref{eq:Thue} is at most $4$. 
On the other hand,  we know four solutions, namely
$$(-1,-1), \, (0,-1), \, (1,0), \, (5,4).$$
Thus, these are all the solutions to our Thue equation.

\printbibliography[heading=bibintoc]

\end{document}